\newtheorem{theorem}{Theorem}[section]
\newtheorem{lemma}[theorem]{Lemma}
\newtheorem{conjecture}[theorem]{Conjecture}
\DeclareMathOperator{\tw}{tw}
\DeclareMathOperator{\ta}{{\mathsf{tree}-\alpha}}
\def\dd{\hbox{-}}   
\newcommand{\mca}{\mathcal}
\newcommand{\poi}{\mathbb{N}} 
\newcounter{tbox}
\newcommand{\sta}[1]{\medskip\medskip\refstepcounter{tbox}\noindent{\parbox{\textwidth}{(\thetbox) \emph{#1}}}\vspace*{0.3cm}}
\newcommand{\mylongtitle}[1]{%
  \ifodd\value{page}%
    \protect\parbox{0.97\linewidth}{#1}\hfill%
  \else%
    \hfill\protect\parbox{0.97\linewidth}{#1}%
  \fi%
}
\title[Tree independence number III.]{Tree independence number\\
III. Thetas, prisms and stars}
\author{Maria Chudnovsky$^{\mathsection}$}
\thanks{$^{\mathsection}$ Princeton University, Princeton, NJ, USA. Supported by NSF grant  DMS-2348219  and by AFOSR grant FA9550-22-1-0083.}
\author{Sepehr Hajebi$^{\dagger, \ddagger}$}
\thanks{$^{\dagger}$ Department of Combinatorics and Optimization, University of Waterloo, Waterloo, Ontario, Canada.}
\thanks{$^{\ddagger}$ Corresponding Author (\href{mailto:shajebi@uwaterloo.ca}{\texttt{shajebi@uwaterloo.ca}}).}
\author{Nicolas Trotignon$^{\star}$}
\thanks{$^{\star}$ CNRS, ENS de Lyon, Université Lyon 1, LIP UMR 5668, 69342 Lyon Cedex 07, France. Partially supported by the French National
Research Agency under research grant ANR DIGRAPHS ANR-19-CE48-0013-01
and the LABEX MILYON (ANR-10-LABX-0070) of Université de Lyon, within
the program Investissements d’Avenir (ANR-11-IDEX-0007) operated by
the French National Research Agency (ANR), and H2020-MSCA-RISE project CoSP- GA No. 823748}
\date{\today}
\begin{document}
 \maketitle
 
\begin{abstract}
We prove that for every $t\in \poi$ there exists $\tau=\tau(t)\in \poi$ such that every (theta,~prism, $K_{1,t}$)-free graph has tree independence number at most $\tau$ (where we allow ``prisms'' to have one path of length zero).
 \end{abstract}


\section{Introduction}\label{sec:intro}
Graphs in this paper have finite and non-empty vertex sets, no loops and no parallel edges. The set of all positive integers is denoted by $\poi$, and for every $n\in \poi$, we write $[n]$ for the set of all positive integers no greater than $n$. 

Let $G=(V(G),E(G))$ be a graph.  A {\em clique} in $G$ is a set of pairwise adjacent vertices. A {\em stable} or {\em independent} set in $G$ is a set of vertices no two of which are adjacent.
The maximum cardinality of a stable set is denoted by $\alpha(G)$, and the maximum cardinality of a clique in $G$ is denoted by $\omega(G)$. For a graph $H$ we say that $G$ {\em contains $H$} if $H$ is
isomorphic to an induced subgraph of $G$. We say that
$G$ is {\em $H$-free} if $G$ does not contain $H$.
For a set $\mathcal{H}$ of graphs, $G$ is
{\em $\mathcal{H}$-free} if $G$ is $H$-free for every $H \in \mathcal{H}$. For a subset $X$ of $V(G)$, we denote by $G[X]$ the induced subgraph of $G$ with vertex set $X$, we often use ``$X$'' to denote both the set $X$ of vertices and the graph $G[X]$.

Let $X \subseteq V(G)$.  We write $N_G(X)$ for the set of all vertices in $G \setminus X$ with at least one neighbor in $X$, and we define $N_G[X]=N_G(X) \cup X$.   When there is no danger of confusion, we omit the subscript ``$G$''. For $Y\subseteq V(G)$, we write $N_Y(X)=N_G(X)\cap Y$ and $N_Y[X]=N_Y(X)\cup X$. When $X = \{v\}$ is a singleton, we write $N_Y(v)$ for $N_Y(\{v\})$ and $N_Y[v]$ for $N_Y[\{v\}]$. 

Let $x\in V(G)$ and let $Y\subseteq V(G)$. We say that \textit{$x$ is complete to $Y$ in $G$} if $N_Y[x]=Y$, and we say that \textit{$x$ is anticomplete to $Y$ in $G$} if $N_G[x]\cap Y=\emptyset$. In particular, if $x\in Y$, then $x$ is neither complete nor anticomplete to $Y$ in $G$. For subsets $X,Y$ of $V(G)$, we say that \textit{$X$ and $Y$ are complete in $G$} if every vertex in $X$ is complete to $Y$ in $G$, and we say that \textit{$X$ and $Y$ are anticomplete in $G$} if every vertex in $X$ is anticomplete to $Y$ in $G$. In particular, if $X$ and $Y$ are either complete or anticomplete in $G$, then $X\cap Y=\emptyset$. 

For a graph $G = (V(G),E(G))$, a \emph{tree decomposition} $(T, \beta)$ of $G$ consists of a tree $T$ and a map $\beta: V(T) \to 2^{V(G)}$ with the following properties: 
\begin{itemize}
    \item For every $v \in V(G)$, there exists $t \in V(T)$ with $v \in \beta(t)$. 
    
    \item For every $v_1v_2 \in E(G)$, there exists $t \in V(T)$ with $v_1, v_2 \in \beta(t)$.
    
    \item $T[\{t \in V(T) \mid v \in \beta(t)\}]$ is connected for all $v\in V(G)$.
\end{itemize}

The \textit{treewidth} of $G$, denoted $\tw(G)$, is the smallest integer $w\in \poi$ such that $G$ admits a tree decomposition $(T,\beta)$ with $|\beta(t)|\leq w+1$ for all $t\in V(T)$. The \textit{tree independence number} of $G$, denoted $\ta(G)$, is the smallest integer $s\in \poi$ such that $G$ admits a tree decomposition $(T,\beta)$ with $\alpha(G[\beta(t)])\leq s$ for all $t\in V(T)$.
\medskip

Both the treewidth and the tree independence number are of great interest in structural and algorithmic graph theory (see \cite{ti1, tw15, ti2, dfgkm, dms2} for detailed discussions). They are also related quantitatively because, by Ramsey's theorem \cite{multiramsey}, graphs of bounded clique number and bounded tree independence number have bounded treewidth (see also Lemma 3.2 in \cite{dms2}). Dallard, Milani\v{c}, and \v{S}torgel \cite{dms2} conjectured that the converse is also true in \textit{hereditary} classes of graphs (meaning classes which are closed under taking induced subgraphs). Let us say that a graph class $\mca{G}$ is \textit{$(\tw,\omega)$-bounded} if there is a function $f:\poi\rightarrow \poi$ such that every graph $G\in \mca{G}$ satisfies $\tw(G)\leq f(\omega(G))$.

\begin{conjecture}[Dallard, Milani\v{c}, and \v{S}torgel \cite{dms2}]\label{conj:false}
    For every hereditary class $\mca{G}$ which is $(\tw,\omega)$-bounded, there exists $\tau=\tau(\mca{G})\in \poi$ such that $\ta(G)\leq \tau$ for all $G\in \mca{G}$.
\end{conjecture}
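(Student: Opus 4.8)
The statement is a conjecture of Dallard, Milani\v{c}, and \v{S}torgel, not a theorem to be established here: the abstract commits only to the special case of (theta, prism, $K_{1,t}$)-free graphs, and the very fact that the positive result is confined to so narrow a class is, to my mind, strong evidence that the conjecture is \emph{false} in general. My plan is therefore to refute it, by exhibiting a hereditary, $\tw$-bounded class with unbounded tree independence number. The first thing to pin down is what such a counterexample must look like. Since any tree decomposition into bags of size at most $\tw(G)+1$ has $\alpha(G[\beta(t)])\leq|\beta(t)|$, one always has $\ta(G)\leq\tw(G)+1$; combined with $\tw(G)\leq f(\omega(G))$ this forces $\ta(G)\leq f(\omega(G))+1$. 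Hence any counterexample family $(G_n)$ with $\ta(G_n)\to\infty$ must also have $\omega(G_n)\to\infty$: the tree independence number can be unbounded only \emph{through} the clique number, and the task is to make $\ta$ grow as a genuine unbounded function of $\omega$ while pinning $\tw$ to $f(\omega)$ on every induced subgraph.

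The main obstacle, and the reason the naive constructions all collapse, is the tension between ``$\tw$-bounded'' being a hereditary hypothesis and the mechanisms that usually drive treewidth up. The obvious idea is to take a sparse graph of large treewidth (a wall, a grid, or $K_{m,m}$) and blow up each vertex into a clique so that $\omega$ climbs in step with $\tw$; this is fatal, because selecting one vertex from each blown-up clique recovers the original sparse graph as an induced subgraph, and that subgraph has bounded clique number but unbounded treewidth, so the class fails to be $\tw$-bounded after all. More generally, $\tw$-bounded hereditary classes cannot contain arbitrarily large subdivided walls, so the standard high-treewidth gadgets are simply unavailable. A valid counterexample must instead force large treewidth \emph{robustly through cliques}: the cliques must overlap so heavily that no sparse transversal can be deleted to expose a bounded-$\omega$, high-$\tw$ induced subgraph, while the global structure still obliges every tree decomposition to place a large independent set into a single bag.

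Concretely, I would build the forcing mechanism out of exactly the configurations this paper excludes in its positive result --- thetas, prisms, and large stars $K_{1,t}$ --- since their exclusion is what buys boundedness of $\ta$, and so their presence is the natural engine for unboundedness. The plan is to arrange many ``thickened'' thetas and prisms (paths replaced by, and glued along, large cliques) into a single hereditary family; to verify that deleting vertices to lower $\omega$ simultaneously dismantles the treewidth, so that $\tw\leq f(\omega)$ holds class-wide and not merely along $(G_n)$; and then to prove the matching lower bound $\ta(G_n)\to\infty$ by producing, for each $n$, an \emph{independence bramble} --- a family of connected vertex subsets, pairwise touching, that no bag of small independence number can simultaneously meet --- certifying that every tree decomposition has a bag of large independence number. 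The hard part will be the simultaneity: the lower-bound certificate for $\ta$ and the upper-bound certificate for $\tw$ pull in opposite directions, and threading one construction through both, while respecting the hereditary $\tw$-bound on \emph{all} induced subgraphs rather than just the $G_n$, is precisely where I expect the bulk of the work, and the real risk of failure, to lie.
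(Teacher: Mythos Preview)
The statement is a conjecture, and the paper does not prove (or disprove) it. Immediately after stating it, the paper simply records that ``Conjecture~\ref{conj:false} was recently refuted \cite{lwnew} by two of the authors of this paper,'' and then moves on to its actual goal, the positive special case of Theorem~\ref{thm:main}. There is therefore no proof in this paper to compare your proposal against.

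Your instinct is right on the main point: the conjecture is indeed false, and your observation that any counterexample family must have $\omega(G_n)\to\infty$ (since $\ta(G)\leq\tw(G)+1\leq f(\omega(G))+1$) is correct and useful. But what you have written is a research plan, not a proof. The proposed construction---``thickened'' thetas and prisms glued along cliques---is not specified concretely enough to verify either the hereditary $\tw$-bound or the lower bound on $\ta$, and you yourself flag the simultaneity of these two requirements as ``precisely where I expect the bulk of the work, and the real risk of failure, to lie.'' That is an honest assessment, but it also means the proposal does not yet constitute a refutation. If you want to actually disprove the conjecture, you would need to produce an explicit family $(G_n)$, verify the $\tw$-bound on \emph{all} induced subgraphs of all $G_n$ (not just the $G_n$ themselves), and give a concrete lower-bound certificate for $\ta(G_n)$; none of these steps is carried out here. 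The actual refutation lives in the cited paper \cite{lwnew}, not in this one.
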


Conjecture~\ref{conj:false} was recently refuted \cite{lwnew} by two of the authors of this paper. It is still natural to ask: which $(\tw,\omega)$-bounded hereditary classes have bounded tree independence number? So far, the list of hereditary classes known to be of bounded tree independence number is not very long (see \cite{ti1, dms3, dms2} for a few). More hereditary classes are known to be $(\tw,\omega)$-bounded. The reasons for the existence of the bound are often highly non-trivial, and it is not known whether the corresponding class has bounded tree independence number. A notable instance is the class of all (theta, prism)-free graphs excluding a fixed forest \cite{tw8}, which we will focus on in this paper.
\medskip

Let us first give a few definitions. Let $P$ be a graph which is a path. Then we write, for $k\in \poi$,
$P=p_1\dd \cdots \dd p_k$
to mean $V(P)=\{p_1, \ldots, p_k\}$, and for all $i,j\in [k]$, the vertices  $p_i$ and $p_j$ are adjacent in $P$ if and only if $|i-j| = 1$. We call the vertices $p_1$ and $p_k$ the \textit{ends of $P$}, and we say that $P$ is a \textit{path from $p_1$ to $p_k$} or a \textit{path between $p_1$ and $p_k$}. We refer to $V(P)\setminus \{p_1,p_k\}$ as the \emph{interior of $P$} and denote it by $P^*$. The \textit{length} of a path is its number of edges. Given a graph $G$, by a \textit{path in $G$} we mean an induced subgraph of $G$ which is a path. Similarly, for $t\in \poi\setminus \{1,2\}$, given a $t$-vertex graph $C$ which is a cycle, we write $C=c_1\dd \cdots \dd c_t\dd c_1$
to mean $V(C)=\{c_1, \ldots, c_t\}$, and for all $i,j\in [t]$, the vertices  $c_i$ and $c_j$ are adjacent in $C$ if and only if $|i-j|\in \{1,t-1\}$.  The \textit{length} of a cycle is its number of edges (which is the same as its number of vertices). For a graph $G$, a \textit{hole} in $G$ is an induced subgraph of $G$ which is a cycle of length at least four.
\begin{figure}[t!]
    \centering
    \includegraphics[scale=0.8]{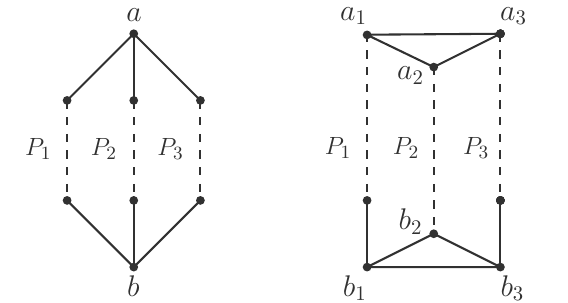}
    \caption{A theta (left) and a prism (right). Dashed lines represent paths of arbitrary (possibly zero) length.}
    \label{fig:thetaprism}
\end{figure}

A {\em theta} is a graph $\Theta$ consisting of two non-adjacent vertices $a, b$, called the \textit{ends of $\Theta$}, and three pairwise internally disjoint paths $P_1, P_2, P_3$ of length at least two in $\Theta$ from $a$ to $b$, called the \textit{paths of $\Theta$}, such that $P_1^*, P_2^*, P_3^*$ are pairwise anticomplete in $\Theta$ (see Figure~\ref{fig:thetaprism}). A {\em prism} is a graph $\Pi$ consisting of two triangles $\{a_1,a_2,a_3\}, \{b_1,b_2,b_3\}$ called the \textit{triangles of $\Pi$}, and three pairwise disjoint paths $P_1,P_2,P_3$ in $\Pi$, called the \textit{paths of $\Pi$}, such that for each $i\in \{1,2,3\}$, $P_i$ has ends $a_i,b_i$, for all distinct $i,j\in \{1,2,3\}$, $a_ia_j$ and $b_ib_j$ are the only edges of $\Pi$ with an end in $P_i$ and an end in $P_j$, and  for every $i \neq  j \in \{1,2,3\}$ $P_i \cup P_j$ is a hole (see Figure~\ref{fig:thetaprism}). If follows that if
$P_2$ has length zero, then each of $P_1, P_3$ has length at least two. We remark that the last condition is non-standard; the paths of a prism are usually of non-zero length, and a prism with a length-zero path is sometimes called a ``line-wheel.'' For a graph $G$, a \textit{theta in $G$} is an induced subgraph of $G$ which is a theta and a \textit{prism in $G$} is an induced subgraph of $G$ which is a prism.
\medskip

The following was proved in \cite{tw8} to show that the local structure of the so-called ``layered wheels'' \cite{layered-wheels} is realized in all theta-free graphs of large treewidth. It also characterizes all forests, and remains true when only  the usual ``prisms'' (with no length-zero path) are excluded: 

\begin{theorem}[Abrishami, Alecu, Chudnovsky, Hajebi, Spirkl \cite{tw8}]\label{thm:tw8main}
   Let $F$ be a graph. Then the class of all (theta, prism, $F$)-free graphs is $(\tw,\omega)$-bounded if and only if $F$ is a forest. 
\end{theorem}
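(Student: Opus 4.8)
This is an equivalence, so I would prove the two implications separately, and the work is very lopsided: that $F$ being a forest is \emph{necessary} for $\tw$-boundedness is comparatively soft, while its \emph{sufficiency} is the substantial direction.

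For necessity I would argue by contraposition. Suppose $F$ is not a forest; then $F$ contains a cycle, and so it contains a chordless cycle $C$ whose length $\ell$ satisfies $3 \le \ell \le |V(F)|$. The plan is to produce, for every $w \in \poi$, a (theta, prism)-free graph $G_w$ with $\omega(G_w) \le 2$, with girth greater than $|V(F)|$, and with $\tw(G_w) \ge w$. Any such $G_w$ is then $F$-free: an induced copy of $F$ would carry an induced copy of $C$, i.e.\ a cycle of length at most $|V(F)|$, contradicting the girth bound. Since every prism contains a triangle and every triangle-free graph has $\omega \le 2$, it suffices to exhibit triangle-free, theta-free graphs of arbitrarily large girth and arbitrarily large treewidth; the family $\{G_w : w \in \poi\}$ then witnesses that the class of (theta, prism, $F$)-free graphs is not $\tw$-bounded. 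Such graphs are available from the layered-wheel constructions that underlie this whole circle of ideas, arranged (by using long enough holes) to have large girth. The one genuinely delicate point here is that large girth by itself does \emph{not} rule out thetas — a theta may have arbitrarily long paths and hence arbitrarily large girth — so one cannot simply invoke generic high-girth, high-treewidth graphs and must use a construction engineered to be theta-free.

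For sufficiency I would again pass to the contrapositive and prove a quantitative statement: there is a function $f$ such that every (theta, prism)-free graph $G$ with $\tw(G) > f(\omega(G), |V(F)|)$ already contains $F$ as an induced subgraph. The reduction I have in mind is to a single ``universal'' target: every forest on at most $|V(F)|$ vertices is an induced subgraph of a large enough complete $\Delta(F)$-ary tree, and, more to the point, of the local structure that layered wheels exhibit. Thus it is enough to prove that a (theta, prism)-free graph of bounded clique number and sufficiently large treewidth must contain, as an induced subgraph, a fixed ``layered-wheel-like'' pattern rich enough to embed every forest on $|V(F)|$ vertices. This is exactly the assertion, quoted in the excerpt as the purpose of the theorem, that the local structure of layered wheels is realized in all theta-free graphs of large treewidth.

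The heart of the proof, and the step I expect to be hardest, is this structural extraction. Large treewidth supplies a highly linked region (a bramble, or a large well-linked set), but in an arbitrary graph such a region produces a wall, which is useless here because walls contain thetas. The role of theta- and prism-freeness is precisely to forbid the grid-like alternative and to force a tree-like, layered pattern instead: excluding these two configurations tightly constrains how induced paths may attach to a hole or to one another, and I would exploit this to carve out the required pattern — repeatedly locating a vertex or a small connected hub from which many induced paths leave into pairwise-anticomplete parts of the graph, and recursing, while Ramsey-type cleaning keeps the pieces pairwise anticomplete and controls the blow-up in terms of $\omega(G)$. Ensuring that no new theta or prism is created along the way, and that the extracted vertices and paths induce \emph{exactly} the intended pattern (in particular retaining the true edges needed to realise forests with adjacent high-degree vertices, rather than only their subdivisions), is where the configuration-freeness must be used essentially and where the bookkeeping becomes heavy. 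Once a pattern of the required size is isolated it contains $F$, completing the contrapositive and with it the theorem.
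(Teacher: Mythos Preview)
This theorem is not proved in the present paper: it is quoted from \cite{tw8} as background and motivation (the paper's contribution is Theorem~\ref{thm:main}, which strengthens the ``only if'' direction in the special case $F=K_{1,t}$ from $\tw$-boundedness to bounded tree independence number). There is therefore no proof in this paper to compare your proposal against.

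That said, your outline is broadly on the right track as a sketch of the argument in \cite{tw8}. For necessity, the layered-wheel constructions of \cite{layered-wheels} (referenced here) are indeed the intended witnesses: theta-free graphs of bounded clique number, arbitrarily large girth, and unbounded treewidth; your observation that high girth alone does not exclude thetas, so a bespoke construction is essential, is exactly the point. For sufficiency, the paper itself says that Theorem~\ref{thm:tw8main} ``was proved in \cite{tw8} to show that the local structure of the so-called `layered wheels' \dots is realized in all theta-free graphs of large treewidth,'' which matches your plan of extracting a layered-wheel-like pattern rich enough to embed any fixed forest. Your sketch of that extraction is, however, only a description of what must happen rather than how; the actual proof in \cite{tw8} is long and the ``configuration-freeness must be used essentially'' step you allude to is where all the work lies.
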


We propose the following strengthening (again, this may be true even with the usual ``prisms'' excluded):

\begin{conjecture}\label{conj:TPForestconj}
  For every forest $F$, there is a constant $\tau=\tau(F)\in \poi$ such that for every (theta, prism, $F$)-free graph $G$, we have $\ta(G)\leq \tau$.  
\end{conjecture}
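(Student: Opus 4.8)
The plan is to prove the conjecture through the balanced-separator reformulation of tree independence number: up to an absolute constant factor, $\ta(G)\le k$ is equivalent to the property that every induced subgraph of $G$ admits, for every vertex weighting, a balanced separator $S$ with $\alpha(G[S])\le k$. So it suffices to show that for every forest $F$ there is a constant $k=k(F)$ such that every (theta, prism, $F$)-free graph has, for every weighting, a balanced separator of independence number at most $k$.

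First I would shrink the family of forests to be handled. Note that $F$-freeness is monotone under the induced-subgraph order on $F$: if $F$ is an induced subgraph of $F'$, then every (theta, prism, $F$)-free graph is (theta, prism, $F'$)-free, so a bound proved for $F'$ transfers verbatim to $F$. Hence it is enough to establish the bound for any family of forests that is cofinal under the induced-subgraph order. Since every forest embeds as an induced subgraph of a complete $d$-ary tree of sufficient height (join the components by a single new vertex into a tree, then root it and realize the rooted tree as a subtree, which is automatically induced), it suffices to take $F$ to be the complete $d$-ary tree $T_{h,d}$ of height $h$. In this family the star $K_{1,t}=T_{1,t}$, settled in this paper, is precisely the \emph{shallow} extreme; the new content is to control depth simultaneously with branching.

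Next I would invoke Theorem~\ref{thm:tw8main}: the class of (theta, prism, $F$)-free graphs is $\tw$-bounded, so $\tw(G)\le f(\omega(G))$ and therefore $\ta(G)\le f(\omega(G))+1$. This is useful only when $\omega(G)$ is bounded, so the entire difficulty lies in obtaining a bound \emph{uniform in} $\omega(G)$; equivalently, in producing balanced separators that are unions of boundedly many cliques (hence of bounded independence number) rather than of bounded size. The natural route is to recurse on clique cutsets and on the local structure underlying the layered-wheel phenomenon of \cite{tw8}: decompose along clique cutsets, and in the clique-cutset-free pieces exploit theta/prism-freeness to argue that any family of pairwise-touching connected subgraphs witnessing large $\ta$ can be routed through a bounded number of cliques.

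I expect this last extraction step to be the crux. If a (theta, prism, $F$)-free graph had no balanced separator of independence number below a large threshold, then the separator/bramble duality would supply many ``independent branches'' attaching to a common substructure, and the goal is to force one of the excluded configurations: several internally disjoint long paths between two non-adjacent vertices give a theta, the triangle-anchored analogue gives a prism, and many pairwise-anticomplete branches hanging off a fixed vertex or short path give a large star---and, by iterating the branching across successive levels, a complete $d$-ary tree, i.e.\ an induced copy of $F$. Making this Ramsey-type extraction robust against cliques (where ostensibly independent branches can collapse) and against long induced paths (the depth direction, invisible in the $K_{1,t}$-free case) is the main obstacle, and is presumably where genuinely new ideas beyond the star case proved here are required.
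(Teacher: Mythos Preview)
This statement is Conjecture~\ref{conj:TPForestconj}, which the paper explicitly leaves \emph{open}: the paper proves only the special case $F=K_{1,t}$ (Theorem~\ref{thm:main}) and remarks that the general forest case ``seems to be much more difficult.'' There is therefore no proof in the paper to compare your proposal against.

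Your proposal is not a proof but a plan, and you effectively say so yourself in the final paragraph: the decisive extraction step---turning the absence of a balanced separator of small independence number into a theta, a prism, or an induced copy of a deep $d$-ary tree---is described only heuristically, and you write that it ``is presumably where genuinely new ideas beyond the star case proved here are required.'' That is the genuine gap. The preliminary reductions you outline (passing to balanced separators via Lemma~\ref{lemma:bs-to-treealpha}; replacing $F$ by a larger forest from a cofinal family such as complete $d$-ary trees; invoking Theorem~\ref{thm:tw8main} to dispose of the bounded-$\omega$ regime) are all correct and standard, but none of them touches the actual difficulty, which is precisely to produce separators of bounded \emph{independence number} uniformly in $\omega$. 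The clique-cutset decomposition you propose does not obviously help here, and in the star case the paper succeeds not via clique cutsets but via the amiability/amicability machinery of \cite{tw15}, which hinges on locating wheels and pyramids whose local neighbourhoods separate the graph (Theorems~\ref{thm:wheel1}, \ref{thm:wheel2}, \ref{thm:pyramidmain}). Extending that machinery---or replacing it---when the forbidden forest has depth greater than one is exactly what remains open.
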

 As far as we know, Conjecture~\ref{conj:TPForestconj} remains open even for paths. But our main result settles the case of stars. For every $t\in \poi$, let $\mca{C}_t$ be the class of all (theta, prism, $K_{1,t}$)-free graphs. We prove that:

\begin{theorem}\label{thm:main}
For every $t\in \poi$, there is a constant $f_{\ref{thm:main}}=f_{\ref{thm:main}}(t)\in \poi$ such that every graph $G\in \mca{C}_t$ satisfies $\ta(G)\leq f_{\ref{thm:main}}$.
\end{theorem}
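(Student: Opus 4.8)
The plan is to bound $\ta(G)$ through balanced separators of bounded independence number. Recall the standard reduction underlying this line of work (see \cite{dms2, ti1}): there is a function $h$ such that if every graph $G$ in a hereditary class admits, for every weight function $w:V(G)\to\poi$, a set $X\subseteq V(G)$ with $\alpha(G[X])\le c$ whose deletion leaves every component of weight at most $\tfrac12 w(V(G))$ (a \emph{$w$-balanced separator}), then $\ta(G)\le h(c)$. So it suffices to produce, for each $t$, a constant $c=c(t)$ such that every $G\in\mca{C}_t$ and every weight function admit a $w$-balanced separator of independence number at most $c$. Note that treewidth is useless here: since $K_{1,t}$ is a forest, Theorem~\ref{thm:tw8main} gives $\tw(G)\le g(\omega(G))$, but $\omega(G)$ is unbounded on $\mca{C}_t$ (every complete graph lies in $\mca{C}_t$), so this yields balanced separators of bounded \emph{size} only when $\omega$ is small. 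The whole difficulty is to handle arbitrarily large cliques, which are harmless for $\ta$ (a clique has independence number $1$) but destroy any size bound.

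The structural engine will be the following consequence of $K_{1,t}$-freeness: for every vertex $v$ we have $\alpha(G[N(v)])\le t-1$, and, crucially, whenever $v$ has at least three neighbors on a hole $H$, these neighbors meet $H$ in boundedly many arcs (the bound depending only on $t$); that is, $K_{1,t}$-freeness forbids the ``spread-out'' hubs that drive the treewidth of the layered wheels of \cite{layered-wheels}. First I would make this precise and extend it from single vertices to connected sets of bounded independence number, establishing that the closed neighborhood of such a set attaches to any path or hole in a bounded number of arcs. Together with theta- and prism-freeness---which restrict three internally disjoint induced paths between a pair of vertices, and the two-triangles-with-three-paths pattern, respectively---this controls how the ``sparse part'' of $G$ (its paths and holes) can wrap around its cliques.

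The heart of the argument is to build the separator. I would fix a weight function $w$ and argue by contradiction, assuming no $w$-balanced separator has independence number at most $c$. By the $\alpha$-analogue of the duality between balanced separators and well-linked sets, this produces a large family of pairwise anticomplete connected subgraphs that are pairwise linked by induced paths through a common region, i.e.\ a configuration that is simultaneously ``independent'' and ``highly linked.'' Such a configuration cannot coexist with theta- and prism-freeness once it is large enough: after passing to a sub-family with uniform adjacency behavior via Ramsey's theorem, three of the linked pieces together with the paths joining them must close up into a theta or a prism (the length-zero path case being exactly why we allow it), \emph{unless} the linkage is funneled through a small set. But if many pairwise anticomplete paths leave a single vertex, their first vertices are pairwise non-adjacent and yield a forbidden $K_{1,t}$; so by pigeonhole a small funnel of large independence number again forces $K_{1,t}$, while a small funnel of bounded independence number is precisely the balanced separator we sought, a contradiction. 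Carrying this out requires organizing the graph around its maximal cliques and using the bounded-arc attachment to route the linking paths.

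I expect the main obstacle to be exactly this interplay between large cliques and the path/hole structure. Large cliques contribute unbounded weight and connectivity while having independence number $1$, so a balanced separator may be forced to cut \emph{through} many cliques; the challenge is to do so while keeping the independence number of the cut bounded, and to convert any failure of this into one of the three forbidden configurations. The quantitative bookkeeping---propagating the bounded-arc attachment through iterated neighborhoods, and pinning down the constants in the Ramsey extraction so that the linked family is genuinely large enough to force a theta or a prism---will be the most delicate part, and is where the precise value of $f_{\ref{thm:main}}(t)$ is determined.
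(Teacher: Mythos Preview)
Your outer reduction is exactly the paper's: reduce to finding, for every weight function, a $w$-balanced separator of the form $N[Y]$ with $|Y|$ bounded in terms of $t$; then $K_{1,t}$-freeness bounds $\alpha(N[Y])$, and Lemma~\ref{lemma:bs-to-treealpha} converts this into a bound on $\ta(G)$. This part is correct and matches the paper's derivation of Theorem~\ref{thm:main} from Theorem~\ref{thm:balancedsep}.

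The genuine gap is in how you propose to produce the separator. Your plan is to assume no such separator exists, invoke an ``$\alpha$-analogue of the duality between balanced separators and well-linked sets'' to obtain many pairwise anticomplete, pairwise linked pieces, and then argue that three of them with their linking paths force a theta or a prism unless the linkage is ``funneled'' through a small set. Neither step is established. First, the duality you appeal to is not a black box; a bramble-type obstruction for tree independence number would need to be formulated and proved. Second, and more seriously, having three connected pieces joined by induced paths does not by itself yield an induced theta or prism: the linking paths may have arbitrary edges between them, and cleaning this up is precisely where theta/prism arguments become delicate. Your dichotomy ``either a theta/prism appears or everything funnels through a small set'' is asserted, not proved, and it is exactly this dichotomy that carries all the weight.

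The paper takes a different and much more structured route to Theorem~\ref{thm:balancedsep}. It plugs $\mca{C}_t$ into the \emph{amiable/amicable} machinery of \cite{tw15}: first it shows $\mca{C}_t$ is amiable (Theorem~\ref{thm:amiable}), meaning that any large trisection can be refined to a consistent alignment or connectifier on both sides; then it shows $\mca{C}_t$ is $\max\{2t,7\}$-amicable (Theorem~\ref{thm:amicable}), meaning that once such an alignment/connectifier is in hand, a specific $7$-element set $Z$ can be located whose closed neighborhood separates two designated vertices. The latter is proved not by a Ramsey extraction but by concrete structural lemmas: for wheels (Theorems~\ref{thm:wheel1} and~\ref{thm:wheel2}) and for pyramids (Theorem~\ref{thm:pyramidmain}, which in turn relies on the strip-structure theorem of \cite{tw8}), one shows that $N[Z(W)]$ or $N[Z(\Sigma)]$ separates vertices lying in different sectors or paths. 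The amicability proof then amounts to recognizing, in each of the several cases for the shape of $(H,X)$ and $(D_1,X)$, either a pyramid or a wheel inside $D_1\cup H\cup X$ and reading off the required $Z$. None of this is visible in your sketch; if you want to pursue a direct contradiction argument, the missing idea is a mechanism that turns ``many linked pieces in a (theta, prism)-free graph'' into a small cutset of the form $N[Z]$, and the paper's experience suggests this mechanism is necessarily case-heavy and relies on nontrivial prior structure theory rather than a single Ramsey step.
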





\section{Outline of the main proof}\label{sec:outline}

Like several earlier results \cite{ti1,ti2,tw15} coauthored by the first two authors of this work, the proof of Theorem~\ref{thm:main} deals with ``balanced separators.'' Let $G$ be a graph and let $w:V(G)\rightarrow \mathbb{R}^{\geq 0}$. For every $X\subseteq V(G)$, we write $w(X)=\sum_{v\in X}w(v)$. We say that that $w$ is a \textit{normal weight function on $G$} if $w(V(G))=1$. Given a graph $G$ and a weight function $w$ on $G$, a subset $X$ of $V(G)$ is called a \textit{$w$-balanced separator} if for every component $D$ of $G\setminus X$, we have $w(D)\leq 1/2$. The main step in the proof of Theorem~\ref{thm:main} is the following:

\begin{theorem}\label{thm:balancedsep}
For every $t\in \poi$, there is a constant $f_{\ref{thm:balancedsep}}=f_{\ref{thm:balancedsep}}(t)\in \poi$ with the following property.
Let $G \in \mca{C}_t$ and let $w$ be a normal weight function on $G$.
Then there exists $Y \subseteq V(G)$ such that $|Y|\leq f_{\ref{thm:balancedsep}}$ and $N[Y]$ is a $w$-balanced separator in $G$.
\end{theorem}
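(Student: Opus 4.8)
The plan is to reduce the statement to a purely structural fact about minimal balanced separators, and then to extract one of the forbidden configurations from any separator that is too ``spread out.'' The reduction is based on two elementary observations. First, any superset of a $w$-balanced separator is again a $w$-balanced separator, since for $S\subseteq S'$ every component of $G\setminus S'$ lies inside a component of $G\setminus S$. Second, in any graph a maximal stable set is dominating, so the domination number of $G[S]$ is at most $\alpha(G[S])$. Consequently it suffices to prove that there is $g=g(t)$ such that every inclusion-minimal $w$-balanced separator $S$ of a graph $G\in\mca{C}_t$ satisfies $\alpha(G[S])\le g$: given such an $S$, choose $Y\subseteq S$ dominating $G[S]$ with $|Y|\le\alpha(G[S])\le g$, so that $S\subseteq N[Y]$ and hence $N[Y]$ is a $w$-balanced separator with $|Y|\le g=:f_{\ref{thm:balancedsep}}$.

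Next I would set up the extraction. Suppose for contradiction that a minimal $w$-balanced separator $S$ contains a large stable set $I=\{s_1,\dots,s_m\}$. Minimality forces every $s\in S$ to be \emph{essential}: the component of $G\setminus(S\setminus\{s\})$ containing $s$ has weight exceeding $1/2$, i.e.\ the components of $G\setminus S$ adjacent to $s$ have total weight more than $1/2$. A weighting/averaging argument (the component-sets attached to the $s_i$ each have weight $>1/2$ while the total weight is at most $1$, so they pairwise overlap and, by averaging, share common heavy members) would then produce two connected regions $A,B$ of $G\setminus S$ and a stable subset $I'\subseteq I$, still large, such that each $s_i\in I'$ has a neighbor in $A$ and a neighbor in $B$. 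Here $K_{1,t}$-freeness enters: no vertex of $A$ can be complete to $t$ pairwise non-adjacent $s_i$ (that would be an induced $K_{1,t}$), so the attachments of $I'$ into $A$ (and into $B$) are \emph{spread out}, and after passing to a further large subset the chosen neighbors $a_i\in A$ (resp.\ $b_i\in B$) may be taken distinct.

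The heart of the proof is to turn this data into a theta or a prism. Fixing an end $a^\ast$ in $A$ and $b^\ast$ in $B$, each $s_i\in I'$ yields a candidate path $a^\ast\rightsquigarrow a_i\dd s_i\dd b_i\rightsquigarrow b^\ast$; three of these, chosen so that their portions inside $A$ and inside $B$ are pairwise internally disjoint and induced, form three paths between $a^\ast$ and $b^\ast$ meeting only at the ends, i.e.\ a theta when the ends are non-adjacent, or a prism in the degenerate case where the connections collapse onto triangles, which is precisely why one must allow a length-zero path (the line-wheel) in the definition. The main obstacle I anticipate is exactly this last step: guaranteeing enough \emph{internally disjoint, induced} paths inside the merely-connected regions $A$ and $B$. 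This requires a careful cleaning of $A$ and $B$ (passing to minimal connecting subtrees and using the spread-out attachments from $K_{1,t}$-freeness to invoke a Menger-type selection), together with a case analysis separating the theta outcome from the prism/line-wheel outcome. Controlling these disjointness and inducedness requirements, rather than the bookkeeping of the weights, is where the real difficulty lies.
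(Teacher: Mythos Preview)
Your reduction to ``every inclusion-minimal $w$-balanced separator has bounded independence number'' is not merely hard to carry out --- it is false already in $\mca{C}_3$. Take a path $A=a_1\dd\cdots\dd a_n$ (with $n>3m$), add vertices $s_1,\dots,s_m$ where $s_i$ is adjacent exactly to $a_{3i}$ and $a_{3i+1}$, and add leaves $b_1,\dots,b_m$ with $b_i$ adjacent only to $s_i$. The only cycles in this graph are the pairwise vertex-disjoint triangles $\{a_{3i},a_{3i+1},s_i\}$, so there is no theta and no prism; and one checks directly that no vertex has three pairwise non-adjacent neighbours, so the graph is $K_{1,3}$-free. Put $w(a_j)=1/(2n)$, $w(b_i)=1/(2m)$, $w(s_i)=0$. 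Then $S=\{s_1,\dots,s_m\}$ is a $w$-balanced separator (the components of $G\setminus S$ are $A$, of weight exactly $1/2$, and the singletons $\{b_i\}$), and it is inclusion-minimal: deleting only $S\setminus\{s_i\}$ leaves the component $A\cup\{s_i,b_i\}$ of weight $1/2+1/(2m)>1/2$. Yet $S$ is stable of size $m$. On this example your averaging step visibly breaks down --- there is a common region $A$ on one side, but each $s_i$ has only its private leaf $b_i$ on the other, so no common $B$ exists --- and of course no theta or prism can be extracted from a graph that contains neither. The gap you flagged in the extraction step is therefore not a technicality to be cleaned up; the target statement is simply wrong.

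The paper proceeds quite differently. It quotes a general result from \cite{tw15} (Theorem~\ref{thm:balancedseptw15}): every $m$-\emph{amicable} class has balanced separators of the form $N[Y]$ with $|Y|$ bounded. All the work goes into proving that $\mca{C}_t$ is $\max\{2t,7\}$-amicable (Theorem~\ref{thm:amicable}). Amicability is not a statement about arbitrary minimal separators but about a highly controlled configuration called a trisection $(D_1,Y,D_2)$, in which $D_1$ is already a path; one must produce a bounded set $Z$ with $N[Z]$ separating two specified vertices of $D_1$. The proof first forces the attachments of a seven-element subset $X\subseteq Y$ into both $D_1$ and $D_2$ to fall into one of finitely many clean patterns --- consistent alignments or connectifiers --- via Theorem~\ref{thm:connectifier2general} and Theorem~\ref{thm:amiable}, and then in each pattern exhibits an explicit wheel or pyramid whose local cutset $Z(\cdot)$ does the separating, using the ``breaking'' results of Sections~\ref{sec:wheel} and~\ref{st:strip}. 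So the paper never tries to bound the independence number of an arbitrary minimal separator; it manufactures one particular separator from seven carefully placed vertices.
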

As shown below, Theorem~\ref{thm:main} follows by  combining Theorem~\ref{thm:balancedsep} and the following (this is not a difficult result; see \cite{ti2} for a proof):
\begin{lemma}[Chudnovsky, Gartland, Hajebi, Lokshtanov and Spirkl; see Lemma 7.1 in  \cite{ti2}]\label{lemma:bs-to-treealpha}
Let $s\in \poi$ and let $G$ be a graph. If for every normal weight function $w$ on $G$, there is a
$w$-balanced separator $X_w$ in $G$ with $\alpha(X_w) \leq s$, then we have  $\ta(G)\leq 5s$.
\end{lemma}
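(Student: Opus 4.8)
The plan is to build an explicit tree decomposition of $G$ by a top-down recursion that repeatedly peels off balanced separators provided by the hypothesis, and to control the independence number of the bags through a carefully maintained invariant. The objects of the recursion are pairs $(C,B)$, where $B\subseteq V(G)$ is a separator, $C$ is a union of components of $G\setminus B$, and $N(C)\subseteq B$; I think of $C$ as the ``interior'' still to be decomposed and of $B$ as its current ``boundary.'' The invariant I would carry is $\alpha(G[B])\le 4s$, and the recursive call on $(C,B)$ should return a rooted tree decomposition of $G[C\cup B]$ whose root bag is $B$ and in which every bag $Z$ satisfies $\alpha(G[Z])\le 5s$. Starting from an initial separator $B_0$ (obtained from the uniform weight function) and attaching, below a common root with bag $B_0$, the decomposition produced for each pair $(C,N(C))$ as $C$ ranges over the components of $G\setminus B_0$, one assembles a tree decomposition of $G$ all of whose bags have independence number at most $5s$; verifying the three axioms is routine, and this exhibits $\ta(G)\le 5s$.

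The recursive step, given $(C,B)$ with $C\neq\emptyset$, is where the hypothesis enters. I would let $w$ be the weight function on $G$ that is uniform on $C$ and zero elsewhere, and invoke the assumption to obtain $X$ with $\alpha(G[X])\le s$ such that every component of $G\setminus X$ contains at most half of $C$. Writing $X'=X\cap(C\cup B)$, the bag of the current node is $B\cup X'$, whose independence number is at most $\alpha(G[B])+\alpha(G[X'])\le 4s+s=5s$, exactly the bound required. Deleting $X'$ from $G[C\cup B]$ partitions $C\setminus X'$ into pieces $C_1,\dots,C_m$, each meeting $C$ in at most $|C|/2$ vertices (this is what forces termination, by induction on $|C|$), and for each $i$ the new boundary satisfies $N(C_i)\subseteq X'\cup\bigl(B\cap N(C_i)\bigr)$ since distinct pieces are anticomplete after the deletion. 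One then recurses on each $(C_i,N(C_i))$ and hangs the resulting subtrees beneath the current node.

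The crux, and the step I expect to be the main obstacle, is re-establishing the invariant $\alpha(G[N(C_i)])\le 4s$ for the new boundaries, that is, preventing the boundary's independence number from accumulating as the recursion descends. The naive inclusion $N(C_i)\subseteq X'\cup B$ only yields $\alpha(G[N(C_i)])\le 5s$, which fails to close the induction and would in fact blow up linearly with the recursion depth. The difficulty is genuine precisely because $\alpha$ is not additive over a partition of $B$, so merely ``balancing the weight of $B$'' does not halve its independence number. To overcome this I would choose $X$ to be balanced not only for the interior weight (needed for termination) but simultaneously with respect to a weight function encoding the independence structure of $B$ (for instance one supported on a maximum, or optimal fractional, independent set of $G[B]$), the goal being to guarantee that each piece $C_i$ is separated from enough of that structure to force $\alpha\bigl(B\cap N(C_i)\bigr)\le 3s$; together with $\alpha(G[X'])\le s$ this would give $\alpha(G[N(C_i)])\le 4s$ and close the recursion, the constant $5=4+1$ reflecting a boundary of independence number $4s$ carried alongside one fresh separator of independence number $s$. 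Extracting such an ``$\alpha$-balanced'' separation from the cardinality-style balancing furnished by the hypothesis — i.e.\ certifying that a single separator of independence number at most $s$ can be taken to roughly halve the boundary's independence number while still balancing the interior — is the technical heart of the argument, and is where the bulk of the work would lie.
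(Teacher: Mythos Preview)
The paper does not prove this lemma; it is quoted from \cite{ti2} with the remark ``for a proof see \cite{ti2}.'' So there is no in-paper argument to compare against, and I comment on your proposal on its own merits.

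Your recursive framework is the standard one and your identification of the crux is correct. But the step you defer --- obtaining a \emph{single} separator that simultaneously halves the interior and forces $\alpha\bigl(B\cap N(C_i)\bigr)\le 3s$ --- does not go through as you suggest. A convex combination of weights (part on $C$, part on a maximum independent set $I$ of $G[B]$) only bounds the weighted sum $|D\cap C|/|C|+|D\cap I|/|I|$ for each component $D$ of $G\setminus X$; neither summand is individually controlled, so neither termination nor the boundary bound follows.

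What is missing is a short swap argument together with a case split, after which the ``bulk of the work'' evaporates. First, put the weight \emph{entirely} on a maximum independent set $I$ of $G[B]$. For any component $D$ of $G\setminus X$ and any independent $J\subseteq B\cap D$, the set $J\cup\bigl(I\setminus(D\cup X)\bigr)$ is independent in $G[B]$ (the two pieces lie in distinct components of $G\setminus X$), whence $|J|\le |I\cap D|+|I\cap X|\le |I|/2+s$. Thus $\alpha(B\cap D)\le \alpha(B)/2+s$ and $\alpha(N(C_i))\le \alpha(B)/2+2s\le 4s$; the invariant closes with no appeal to additivity of $\alpha$. Second, termination does not require balancing $C$ with the same separator. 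If $\alpha(B)>2s$ then, since $B=N(C)$ and $C$ is connected, the separator $X$ must meet $C$: otherwise $C$ lies in a single component $D$, hence $B\setminus X\subseteq D$ and $|I\cap X|\ge |I|-|I\cap D|\ge |I|/2>s$, contradicting $\alpha(X)\le s$. So $|C_i|<|C|$ automatically. If instead $\alpha(B)\le 2s$, balance the uniform weight on $C$; then $\alpha(N(C_i))\le \alpha(B)+s\le 3s$ trivially and $|C_i|\le |C|/2$. Either way $|C|$ strictly drops, the recursion terminates, and every bag has independence number at most $4s+s=5s$.
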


\begin{proof}[Proof of Theorem~\ref{thm:main} assuming Theorem~\ref{thm:balancedsep}]
Let $c=f_{\ref{thm:balancedsep}}(t)$. We prove that $f_{\ref{thm:main}}(t)=5ct$ satisfies the theorem. Let $w$ be a normal weight function on $G$. By Theorem~\ref{thm:balancedsep}, there exists $Y \subseteq V(G)$ such that $|Y|\leq c$ and $X_w=N[Y]$ is a $w$-balanced separator in $G$.  Assume that there is a stable set $S$ in $X_w$ with $|S|>ct$. Since $S\subseteq N[Y]$, it follows that there is a vertex $y\in Y$ with $|N[y]\cap S|\geq t$. But now $G$ contains $K_{1,t}$, a contradiction. We deduce that $\alpha(X_w)\leq ct$. Hence, by Lemma~\ref{lemma:bs-to-treealpha}, we have $\ta(G)\leq 5ct=f_{\ref{thm:main}}(t)$. This completes the proof of Theorem~\ref{thm:main}.
\end{proof}
It remains to prove Theorem~\ref{thm:balancedsep}. The idea of the proof is the following.  In \cite{tw15} a technique was developed to prove that separators satisfying the conclusion of Theorem~\ref{thm:balancedsep} exist. It consists of showing that the graph class in question satisfies two properties: being ``amiable'' and being ``amicable.'' Here we use the same technique.  To prove that a graph class is amiable, one needs to analyze the structure of connected subgraphs containing neighbors of a given set of vertices. To prove that a graph is amicable, it is necessary to show that certain carefully chosen pairs of vertices can  be separated by well-structured separators. Most of the remainder of the paper is devoted to these two tasks.
Section~\ref{sec:wheel} and Section~\ref{sec:strip} contain structural results asserting the existence of separators that will be used to establish amicability. Section~\ref{sec:connectifier} contains definitions and previously known results related to amiability.  Section~\ref{sec:amiable} contains the proof of the fact that the class $\mathcal{C}_t$ is amiable.
Section~\ref{sec:amicable} uses the results of Sections~\ref{sec:wheel} and \ref{sec:strip} to deduce that 
$\mathcal{C}_t$ is amicable, and to complete the proof of  Theorem~\ref{thm:balancedsep}. 


\section{Breaking a wheel}\label{sec:wheel}

A \emph{wheel} in a graph $G$ is a pair $W=(H,c)$ where $H$ is a hole in $G$ and $c\in G\setminus H$ has at least three neighbors in $H$. We also use $W$ to denote the vertex set $H\cup \{c\}\subseteq V(G)$. A \emph{sector} of the wheel $(H,c)$ is a path of non-zero length in $H$
  whose ends are adjacent to $c$ and whose
 internal vertices are not. A wheel is \emph{special} if it has exactly three sectors, one sector
 has length one and the other two (called the \emph{long} sectors) have
 length at least two (see Figure~\ref{fig:wheel} -- A special wheel is sometimes referred to as a ``short pyramid.'')

 For a wheel $W=(H,c)$ in a graph $G$, we define the set $Z(W)\subseteq H\cup \{c\}$ as follows (see Figure~\ref{fig:wheel}). If $W$ is non-special, then $Z(W)=N_H[c]$. Now assume that $W$ is special. Let $ab$ be
  the sector of length one of $W$ and let $d$ be the neighbor of $c$ in
  $H\setminus \{a, b\}$. Then we define $Z(W)=\{a,b,c\}\cup N_H[d]$. 
  
  \begin{figure}[t!]
     \centering
     \includegraphics[scale=0.8]{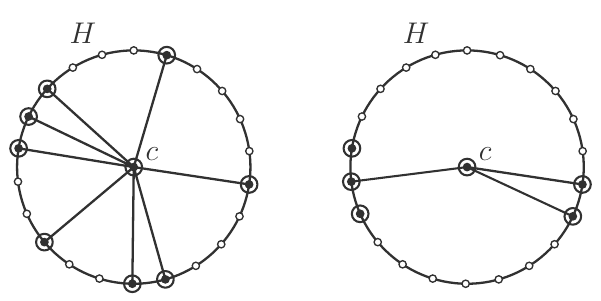}
     \caption{A non-special wheel $W$ (left) and a special wheel $W$ (right). Circled nodes represent the vertices in $Z(W)$.}
     \label{fig:wheel}
 \end{figure}

 Let $G$ be a graph. By a \textit{separation} in $G$ we mean a triple $(L,M,R)$ of pairwise disjoint subsets of $V(G)$ with $L\cup M\cup R=V(G)$, such that neither $L$ nor $R$ is empty and $L$ and $R$ are anticomplete in $G$. Let $x,y\in V(G)$ be distinct. We say that a set $M\subseteq V(G)\setminus \{x,y\}$ \textit{separates $x$ and $y$ in $G$} if there exists a separation $(L,M,R)$ in $G$ with $x\in L$ and $y\in R$. Also, for disjoint sets $X,Y\subseteq V(G)$, we say that a set $M\subseteq V(G)\setminus (X\cup Y)$ \textit{separates $X$ and $Y$} if there exists a separation $(L,M,R)$ in $G$ with $X\subseteq L$ and $Y\subseteq R$. If $X=\{x\}$, we say that \textit{$M$ separates $x$ and $Y$} to mean $M$ separates $X$ and $Y$.
\medskip

We have two results in this section; one for the non-special wheels and one for  special wheels:

\begin{theorem}\label{thm:wheel1}
  Let $G$ be a (theta, prism)-free graph, let $W=(H, c)$ be a non-special
  wheel in $G$ such that $H$ has length at least seven. Let $a, b\in G\setminus N[Z(W)]$ belong to (the interiors of)
  distinct sectors of $W$.  Then $N[Z(W)]$ separates $a$ and $b$ in $G$.
\end{theorem}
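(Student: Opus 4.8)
The plan is to argue by contradiction, deriving a theta from a hypothetical path that evades $N[Z(W)]$. Write $G'=G\setminus N[Z(W)]$, index the neighbours of $c$ on $H$ as $n_1,\dots,n_k$ in cyclic order (so $k\geq 3$) with $S_\ell$ the sector between $n_\ell$ and $n_{\ell+1}$, and for each $\ell$ let $R_\ell$ be the set of vertices of $S_\ell^*$ with no neighbour in $Z(W)$ (the ``deep interior'' of $S_\ell$). Then $R_\ell$ is a subpath of $H$ lying in $G'$, and the hypotheses give $a\in R_i$, $b\in R_j$ with $i\neq j$. Since each $R_\ell$ is connected in $G'$, it suffices to show that no two distinct sets $R_\ell$ lie in the same component of $G'$. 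Suppose not, and among all paths of $G'$ whose two ends lie in two distinct sets $R_\ell$ choose $P$ of minimum length, with ends $a\in R_i$ and $b\in R_j$ (reusing the names $a,b$). Then $P$ is induced; no internal vertex of $P$ lies on $H$ (otherwise $P$ splits into a shorter admissible path); and, crucially, since $N[c]\cup\bigcup_\ell N[n_\ell]\subseteq N[Z(W)]$ is deleted in $G'$, the hub $c$ and each neighbour $n_\ell$ of $c$ on $H$ are anticomplete to $P$. Finally $a\not\sim b$ because $H$ is a hole.

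Next I build the forbidden theta. Let $Q_1,Q_2$ be the two arcs of $H$ between $a$ and $b$. Walking from $a$ towards $n_{i+1}$ meets a spoke strictly before reaching $b$, and likewise towards $n_i$, so each of $Q_1,Q_2$ contains a spoke and hence, as $a,b$ have no spoke-neighbour, has length at least two; and $P$ has length at least two since $a\not\sim b$. I claim $Q_1,Q_2,P$ are three internally disjoint induced paths from $a$ to $b$ with pairwise anticomplete interiors, that is, a theta with ends $a,b$, contradicting that $G$ is theta-free. Internal disjointness and the anticompleteness of $Q_1^*$ and $Q_2^*$ are immediate from $H$ being a hole together with $P^*\cap H=\emptyset$. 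The one nontrivial point is that $P^*$ is anticomplete to $H\setminus\{a,b\}$: because $P^*\subseteq G'$ it is automatically anticomplete to every spoke and to every hole-neighbour of a spoke, so what remains is to forbid edges from $P^*$ to the deep interiors $\bigcup_\ell R_\ell$.

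This last point is the main obstacle, and it is where minimality and the hole-length hypothesis are spent. If $p\in P^*$ is adjacent to $r\in R_\ell$, then rerouting $P$ across the edge $pr$ yields another path of $G'$ joining two distinct deep interiors; comparing its length with $|P|$ forces, once $|P|\geq 3$, that $\ell\in\{i,j\}$ and that $p$ is the $P$-neighbour of $a$ (if $\ell=i$) or of $b$ (if $\ell=j$). I then rule out even these residual attachments: in the configuration $p\sim a$ and $p\sim r$ with $a,r\in R_i$, I extract either a strictly shorter admissible path or a smaller theta with ends $a,r$, formed by the length-two path through $p$, the arc of $S_i$ between $a$ and $r$, and the complementary arc of $H$; the last of these is genuinely induced precisely because $P^*$ meets no other sector's deep interior. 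The degenerate short cases (namely $|P|=2$, and arcs too short to host the required spoke) are handled directly, and it is here that the assumptions that $H$ has length at least seven and that $W$ is non-special are used, to guarantee enough room on $H$ to keep the three paths internally disjoint and of length at least two. Once $P^*$ is shown anticomplete to $H$, the theta of the previous paragraph supplies the desired contradiction, completing the proof.
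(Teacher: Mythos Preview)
There is a genuine gap at the step where you assert that $P^*\subseteq G'$ is ``automatically anticomplete to every spoke and to every hole-neighbour of a spoke.'' Only the first half follows. A vertex $p\in G'=G\setminus N[Z(W)]$ has no neighbour in $Z(W)=\{c\}\cup N_H(c)$, so indeed $p$ is non-adjacent to $c$ and to every spoke $n_\ell$. But nothing forbids $p$ from being adjacent to a vertex $h\in H$ which is itself an $H$-neighbour of some spoke without being a spoke. Such an $h$ lies in $N[Z(W)]$ (so $h\notin G'$), yet membership of $p$ in $G'$ only rules out neighbours of $p$ in $Z(W)$, not neighbours in $N[Z(W)]\setminus Z(W)$. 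Concretely, with four spokes $n_1,\dots,n_4$ on a long hole, a single vertex $p$ adjacent to $a\in R_1$, to $b\in R_3$, and to the $H$-neighbour of $n_2$ inside $S_2$ lies in $G'$ and gives a length-two path $P=a\hbox{-}p\hbox{-}b$, but $Q_1,Q_2,P$ is not a theta because of the stray edge from $p$ into $S_2$.

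This defect propagates. Your ``residual attachment'' theta with ends $a,r$ also needs $p$ anticomplete to the complementary arc of $H$, which fails for the same reason; and the short case $|P|=2$ is exactly the scenario above. A telling symptom is that your argument never uses prism-freeness, whereas it is essential: a vertex with exactly two adjacent $H$-neighbours in each of two sectors yields a prism (with one path of length zero, as allowed here), and theta-freeness alone does not exclude such attachments. The paper handles precisely this by first proving that every vertex of $G\setminus(W\cup T)$ has all its $H$-neighbours inside a single sector (its claim~(2)); this attachment lemma, not minimality of $P$, is what controls edges from $P^*$ to the buffer vertices, and your proposal does not supply a substitute for it.
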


\begin{proof}
Let $S=N_H(c)$ and let $T = N[c] \cup (N[S] \setminus H)$. Then $T\subseteq N[Z(W)]$, and so it suffices to show that $T$ separates $a$ and $b$ (note that $a,b\notin T$). We begin with the following:

\sta{\label{st:claim1} Assume that some vertex $v \in G \setminus (W \cup T)$ has
either a unique neighbor or two non-adjacent neighbors in some sector
$P = p \dd \cdots\dd  p'$ of $W$. Let $b'$ be the neighbor of $p$ in
$W\setminus P$ and  $b''$ be the neighbor of $p'$ in
$W\setminus P$. Then $N_H(v) \subseteq P \cup \{b', b''\}$.}

Otherwise, $v$ has a neighbor
$d \in H \setminus (P \cup \{b', b''\})$.  Also, $c$ has a neighbor
$d' \in H \setminus (P \cup \{b', b''\})$, as otherwise $W$ would
be a prism or a special wheel.  We choose $d$ and $d'$ such that the path $Q$ in $H\setminus P$ from $d$ to $d'$ is minimal. If $v$ has a unique neighbor $a$ in $P$, then $P\cup Q\cup \{c,v\}$ is a theta in $G$ with ends $a$ and $c$, a contradiction. Also, if $v$
has two non-adjacent neighbors in $P$, then $P\cup Q\cup \{v,c\}$ contains a theta with ends $c$ and $v$. This proves \eqref{st:claim1}.

\sta{\label{st:claim2} For every $v \in G \setminus (W \cup T)$, there
exists a sector $P$ of $W$ such that $N_H(v) \subseteq P$.}

Suppose there exists a sector $P = p \dd \cdots\dd  p'$ such that $v$ has two
non-adjacent neighbors in $P$.  Then, by \eqref{st:claim1}, we may assume up to
symmetry that $v$ is adjacent to the neighbor $b$ of $p$ in
$H\setminus P$.  By \eqref{st:claim1}, $b$ is the unique neighbor of $v$ in some
sector $Q$ of $W$.  So the fact that $v$ has at least two neighbors in
$P$ contradicts \eqref{st:claim1} applied to $v$ and $Q$.

Suppose there exists a sector $P = p \dd \cdots\dd  p'$ such that $v$ has a
unique neighbor $a$ in $P$.  By \eqref{st:claim1}, we may assume that
$N_H(v) = \{a, b', b''\}$ where $b'$ is the neighbor of $p$ in $W\setminus P$ and $b''$ is the neighbor of $p'$ in $W\setminus P$ (because $N_H(v) = \{a, b'\}$ or
$N_H(v) = \{a, b''\}$ would imply that $v$ and $H$ form a theta).  Let
$Q=p \dd \cdots\dd  q$ be the sector of $W$ that contains $b'$.  By \eqref{st:claim1}
applied to $v$ and $Q$, we have $ap\in E(G)$ and $b''q \in E(G)$.  So,
$b''$ is the unique neighbor of $v$ in the sector $R = p'\dd \cdots\dd  q$ of $W$.
By \eqref{st:claim1} applied to $v$ and $R$, we have $ap'\in E(G)$ and
$b'q \in E(G)$.  So $H$ has length six, a contradiction.

We proved that for every sector $P$ of $W$, either $v$ has no neighbors
in $P$, or $v$ has two neighbors in $P$, and those
neighbors are adjacent.  We may therefore assume that $v$ has
neighbors in at least three distinct sectors of $W$, because if $v$
has neighbors in exactly two of them, then $H\cup \{v\}$ would be a
prism.  So, suppose that $P= p\dd \cdots\dd  p'$, $Q = q\dd \cdots\dd  q'$ and
$R = r\dd \cdots\dd  r'$ are three distinct sectors of $W$, and $v$ is adjacent
to $x, x' \in P$, to $y, y' \in Q$ and to $z, z' \in R$.
Suppose up to symmetry that $p$, $x$, $x'$, $p'$, $q$, $y$, $y'$, $q'$,
$r$, $z$, $z'$ and $r'$ appear in this order along $H$.  Then there is a theta in $G$ with ends $c,v$ and paths 
$v\dd x\dd P\dd p\dd c$, $v\dd y\dd Q\dd q\dd c$ and $v\dd z\dd R\dd r\dd c$, a
contradiction.  This proves \eqref{st:claim2}. 
\medskip
 
To conclude the proof, suppose for a contradiction that the interiors
of two distinct sectors of $W$ are contained in the same connected component of
$G \setminus T$.  Then there exists a
path $Y = v\dd \cdots\dd  w$ in $G\setminus T$
and two sectors $P = p\dd \cdots\dd  p'$ and $Q = q\dd \cdots\dd  q'$ of $W$ such that
$v$ has neighbors in  $P^*$ and $w$ has neighbors in $Q^*$.  By \eqref{st:claim2}, $v$ is anticomplete to $W\setminus P$
and $w$ is anticomplete to $W \setminus Q$ (in particular, $Y$ has
length at least one).  By choosing such a path $Y$ to be minimal, we deduce
that $Y^*$ is anticomplete to $H$.

Suppose that $v$ has a unique neighbor, or two distinct and
non-adjacent neighbors in $P$.  Next, assume that $w$ has a neighbor $d$ in $H$ that
is distinct from $b'$ and $b''$ where $b'$ is the neighbor of $p$ in $W\setminus P$ and $b''$ is the neighbor of $p'$ in $W\setminus P$, then let $d'$ be a neighbor of
$c$ in $H \setminus (P \cup \{b', b''\})$ ($d'$ exists for otherwise,
$W$ would be a prism or a special wheel).  We choose $d$ and $d'$
such that the path $R$ in
$H\setminus P$ from $d$ to $d'$ is minimal. We now see that if $v$ has a unique neighbor $a$ in
$P$, then  $P \cup Y \cup R \cup \{c\}$ contains a theta with ends $a$ and $c$, a
contradiction. 
Also, if $v$ has two distinct non-adjacent neighbors in
$P$, then $P\cup Y\cup R\cup \{c\}$ contains a theta with ends $c$ and $v$.  So,
$w$ has only two possible neighbors in $H$, namely, $b'$ and $b''$.  Due to symmetry, we may assume that $b'w\in E(G)$ (so $b''w\notin E(G)$). It follows that $b'$  is non-adjacent to $c$.  If $v$
has a unique neighbor in $P$, then $H\cup Y$ is a theta in $G$, so $v$
has a neighbor in $P$ that is non-adjacent to $p$.  In particular,
there exists a path $R'$ from $v$ to $p'$ in $P\cup \{v\}$ that
contains no neighbor of $p$.  It follows that $R'\cup Q\cup Y\cup \{c\}$
is a theta in $G$ with ends $b'$ and $c$.

We deduce that $v$ has exactly two neighbors in $P$, and those neighbors are adjacent.  By the same argument, we can prove that $w$ has
exactly two neighbors in $P$ that are adjacent.  But now $H\cup Y$ is a prism in $G$, a contradiction. This completes the proof of Theorem~\ref{thm:wheel1}.
\end{proof}

\begin{theorem}\label{thm:wheel2}
  Let $G$ be a (theta, prism)-free graph and let $W= (H, c)$ be a special
  wheel in $G$ whose long sectors have lengths at least three. Let $a'', b''\in G\setminus N[Z(W)]$ belong to (the interiors of)
  distinct sectors of $W$.  Then $N[Z(W)]$ separates $a''$ and $b''$ in $G$.  
\end{theorem}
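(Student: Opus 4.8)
The plan is to mirror the proof of Theorem~\ref{thm:wheel1}, exploiting the fact that a special wheel is a pyramid: writing $ab$ for the sector of length one and $d$ for the third neighbor of $c$ in $H$, the graph $W$ consists of the triangle $\{a,b,c\}$, the apex $d$, the edge $cd$, and the two long sectors $P$ (from $a$ to $d$) and $Q$ (from $b$ to $d$), so that $Z(W)=\{a,b,c\}\cup N_H[d]$. Since the short sector has empty interior, $a''$ and $b''$ must lie in $P^*$ and $Q^*$ respectively; and since $a'',b''\notin N[Z(W)]$, they avoid the neighbors of $a$, of $d$, and of the two vertices of $N_H(d)$, so each sits in the ``deep interior'' of its long sector (so that the two long sectors in fact have length at least five for the statement to be non-vacuous). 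It therefore suffices to show that $a''$ and $b''$ lie in distinct components of $G\setminus N[Z(W)]$, and I would assume otherwise and fix a \emph{minimal} path $Y$ joining them in $G\setminus N[Z(W)]$.

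The first main step is to prove two claims controlling, for each vertex $v\in G\setminus (W\cup N[Z(W)])$, its neighborhood $N_H(v)$ -- exact analogues of the two displayed claims in the proof of Theorem~\ref{thm:wheel1}. Because $v\notin N[Z(W)]$, its neighbors in $H$ are confined to $(P^*\cup Q^*)\setminus N_H[d]$, and crucially this already removes the troublesome ``$\{b,b'\}$'' exception appearing in Theorem~\ref{thm:wheel1}, since the endpoints of the long sectors and their relevant neighbors all lie in $Z(W)$. I would show \emph{(i)} if $v$ has a unique neighbor or two non-adjacent neighbors in one long sector then $N_H(v)$ is contained in that sector, and \emph{(ii)} for every such $v$, the set $N_H(v)$ is contained in a single long sector, where it is either empty, a single vertex, or two adjacent vertices. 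The contradictions come from (theta, prism)-freeness, but via a different construction than in Theorem~\ref{thm:wheel1}, since for a special wheel $c$ has no neighbor of $H$ outside the closure of a single sector. Instead I would route through the apex $d$ and the short edge $ab$: if $v$ had neighbors $x_i\in P^*$ and $y_j\in Q^*$, then the two arcs of $H$ between $x_i$ and $y_j$ (one through $d$, one through the edge $ab$) together with the path $x_i\dd v\dd y_j$ form a theta with ends $x_i,y_j$; and if $v$ is moreover complete to a consecutive pair in one sector, the triangle $\{a,b,c\}$, the route through $d$, and the two long sectors assemble into a prism.

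With the claims in hand, the final step repeats the concluding argument of Theorem~\ref{thm:wheel1}. Minimality gives a path $Y=v\dd\cdots\dd w$ with $v$ attached only to (the deep interior of) $P$, $w$ only to that of $Q$, and $Y^*$ anticomplete to $W$. If both attachments are ``thin'' (a unique neighbor $x_i$, resp.\ $y_j$), then $x_i\dd v\dd Y\dd w\dd y_j$ is an induced path which, together with the arc of $H$ through $d$ and the arc through $ab$, yields a theta with ends $x_i,y_j$. If both are ``thick'' (adjacent pairs $x_i,x_{i+1}$ and $y_j,y_{j+1}$), then $H\cup Y$ is a prism on the triangles $\{v,x_i,x_{i+1}\}$ and $\{w,y_j,y_{j+1}\}$, with the three paths $v\dd Y\dd w$, the arc through $d$, and the arc through $ab$. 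The mixed cases are handled exactly as in the last paragraph of Theorem~\ref{thm:wheel1}, by rerouting within a sector so as to avoid a designated neighbor. Any of these contradicts (theta, prism)-freeness and completes the proof.

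I expect the main obstacle to be precisely the point where the non-special argument breaks: Theorem~\ref{thm:wheel1} repeatedly uses that $c$ has a neighbor in $H$ outside the closure of any single sector, which is false for a special wheel. The work is to replace every such use by an argument routed through the apex $d$ and the short edge $ab$, and to verify that the special definition $Z(W)=\{a,b,c\}\cup N_H[d]$ (rather than $N_H[c]$) removes exactly those attachments near the triangle and the apex that would otherwise obstruct the theta/prism constructions. Getting the bookkeeping near $d$ and near $\{a,b,c\}$ right -- and confirming the length hypothesis leaves enough room in each long sector -- is the delicate part.
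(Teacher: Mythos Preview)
Your plan is sound and would work, but it is considerably more elaborate than what the paper actually does. Rather than mirroring the two preliminary claims of Theorem~\ref{thm:wheel1}, the paper exploits the very simple structure of a special wheel and goes straight to the minimal-path analysis. With $P$ the long sector through $a$, $Q$ the long sector through $b$, and $a',b'$ the neighbours of $d$ on $P,Q$, the paper assumes a shortest offending path $Y=v\dd\cdots\dd w$ in $G\setminus T$ (where $T=N[c]\cup(N[\{a,b,a',b',d\}]\setminus H)\subseteq N[Z(W)]$) with $v$ attached to $P^*$ and $w$ to $Q^*$, and immediately looks at the extreme neighbours $x,x'$ of $v$ in $P$ and $y,y'$ of $w$ in $Q$. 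The crucial difference from your sketch is that every theta is built with one end at $d$: if $x=x'$, the three paths $x\dd P\dd a'\dd d$, $x\dd P\dd a\dd c\dd d$ and $x\dd v\dd Y\dd w\dd y'\dd Q\dd b'\dd d$ form a theta with ends $x,d$; if $x\neq x'$ are non-adjacent, the same three routes give a theta with ends $v,d$; and once both $xx'$ and $yy'$ are forced to be edges, $H\cup Y$ is a prism. No single-vertex analogue of claim~(2) is ever proved.

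Two small points on your constructions. Your proposed theta ``with ends $x_i,y_j$ using both arcs of $H$ plus $x_i\dd v\dd y_j$'' fails as stated whenever $v$ has further neighbours in $H$, since the middle path is then not internally anticomplete to the arcs; routing to the apex $d$ (as the paper does) avoids this bookkeeping entirely. And your prism sentence (``if $v$ is complete to a consecutive pair \ldots the triangle $\{a,b,c\}$ \ldots assemble into a prism'') is not correct in that situation either: two adjacent neighbours in one sector together with an attachment in the other yields a theta with ends $v,d$, not a prism. The prism only appears at the very end, when \emph{both} ends of $Y$ attach via an adjacent pair. These are fixable, but they illustrate why centring every contradiction at $d$ is the cleaner route.
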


\begin{proof}
  Let $ab$ be
  the sector of length one of $W$ and let $d$ be the neighbor of $c$ in
  $H\setminus \{a, b\}$. Let $a'$ be the neighbor of $d$ in the long sector of $W$ containing $a$ and let $b'$ be the neighbors of $d$ in
  the long sector of $W$ containing $b$. Then $Z(W)=\{a, a', b, b', c, d\}$. Let
  $P$ be the path in $H\setminus d$ from $a$ to $a'$ and let $Q$ be the path of $H\setminus d$ from $b$ to $b'$. Assume, without loss of generality, that $a''\in P^*\setminus N[Z(W)]$ and let $b''\in Q^*\setminus N[Z(W)]$.

  Let $T= N[c] \cup (N[\{a, b, a', b', d\}] \setminus H)$. Then $T\subseteq N[Z(W)]$, and so it suffices to show that $T$ separates $a''$ and $b''$ (note that $a'',b''\notin T$).  Suppose not. Then there exists a path $Y = v\dd \cdots\dd  w$ in
  $G\setminus T$ such that $v$ has neighbors in $P^*$,
  $w$ has neighbors in  $Q^*$, $Y\setminus v$ is
  anticomplete to $W\setminus P$ and $Y\setminus w$ is
  anticomplete to $W\setminus Q$ (note that possibly $v=w$).

  Let $x$  be the neighbor of $v$ in $P$ closest to $a$ along $P$ and let $x'$  be the neighbor of $v$ in $P$ closest to $a'$ along $P$.  Let $y$ be the neighbor of
  $w$ in $Q$ closest to $b$ along $Q$ and let $y'$ be the neighbor of
  $w$ in $Q$ closest to $b'$ along $Q$.

  If $x=x'$, then there is a theta in $G$ with ends $x$ and $d$ and paths  $x\dd P\dd a'\dd d$, $x\dd P\dd a\dd c\dd d$ and $x\dd v\dd Y\dd w\dd y'\dd Q\dd b'\dd d$.  So, $x\neq x'$, and symmetrically we have 
  $y\neq y'$.  If $xx'\notin E(G)$, then there is a theta in $G$ with ends $v$ and $d$ and paths $v\dd x'\dd P\dd a'\dd d$, $v\dd x\dd P\dd a\dd c\dd d$ and
  $v\dd Y\dd w\dd y'\dd Q\dd b'\dd d$. So, $xx'\in E(G)$, and
  symmetrically we can prove that $yy' \in E(G)$.  But now $H\cup Y$
  is a prism in $G$, a contradiction. This completes the proof of Theorem~\ref{thm:wheel2}.
\end{proof}

\section{Breaking a pyramid}\label{sec:strip}

A {\em pyramid} is a graph $\Sigma$ consisting of a vertex $a$, a triangle $\{b_1, b_2, b_3\}$ disjoint from $a$ and three paths $P_1,P_2, P_3$ in $\Sigma$ of length at least two, such that for each $i\in [3]$, the ends of $P_i$ are $a$ and $b_i$, and for all distinct $i,j\in [3]$, the sets $V(P_i) \setminus \{a\}$ and $V(P_j) \setminus \{a\}$ are disjoint,  $b_ib_j$ is the only edge of $G$ with an end in  $V(P_i) \setminus \{a\}$ and an end in $V(P_j) \setminus \{a\}$, and for every $i \neq j \in \{1,2,3\}$ $P_i \cup P_j$ is a hole (the assumption that $P_1,P_2, P_3$ have length at least two is non-standard; usually, one of the paths is allowed to have length $1$, and our definition above would refer to a ``long'' pyramid.)

We say that $a$ is the {\em apex} of $\Sigma$, the triangle $\{b_1,b_2,b_3\}$ is the {\em base} of $\Sigma$, and $P_1,P_2,P_3$ are the \textit{paths} of $\Sigma$.  We also define $Z(\Sigma)=N_{\Sigma}[a]\cup \{b_1,b_2,b_3\}$ (so we have $|Z(\Sigma)|=7$). For a graph $G$, by a \textit{pyramid in $G$} we mean an induced subgraph of $G$ which is a pyramid (see Figure~\ref{fig:pyramid}).

\begin{figure}[t!]
    \centering
    \includegraphics[scale=0.8]{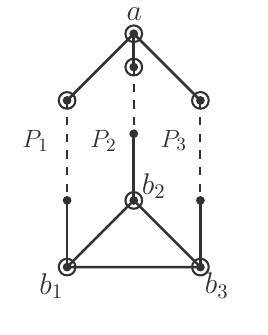}
    \caption{A pyramid $\Sigma$. Dashed lines represent paths of arbitrary (possibly zero) length, and circled nodes represent the vertices in $Z(\Sigma)$.}
    \label{fig:pyramid}
\end{figure}
\medskip

The main result of this section, Theorem~\ref{thm:pyramidmain} below, follows from much more general results of \cite{tw8}.  However, there is  
also a short and self-contained proof, which we include here:

\begin{theorem}\label{thm:pyramidmain}
 Let $G$ be a (theta, prism)-free graph and let $\Sigma$ be a pyramid in $G$ with apex $a$, base $\{b_1,b_2,b_3\}$ and paths $P_1,P_2$ and $P_3$ as in the definition. Let $u,v\in G\setminus N[Z(\Sigma)]$ belong to distinct paths of $\Sigma$. Then $N[Z(\Sigma)]$ separates $u$ and $v$ in $G$.
\end{theorem}

\begin{proof}
Suppose not. Then there exist $u,v\in G\setminus N[Z(\Sigma)]$, belonging to distinct paths of $\Sigma$, such that $N[Z(\Sigma)]$ does not separate $u$ and $v$ in $G$.  It follows that for distinct $i,j\in [3]$, there exists a path $Q =x\dd \cdots \dd y$ in $G\setminus (\Sigma \cup N[Z(\Sigma)])$ such that $x$ has a neighbor in $P^*_i$ and $y$ has a neighbor in $P^*_j$. We choose $i,j\in [3]$ and $Q$ subject to the minimality of $Q$. By symmetry, we may assume that $i=1$ and $j=2$.

From the minimality of $Q$ and the fact that $Q\subseteq V(G)\setminus (\Sigma \cup N[Z(\Sigma)])$, it follows that:

\begin{itemize}
    \item $N_{P_1}(x)\subseteq P_1\setminus Z(\Sigma)$, and $Q\setminus x$ and $P_1$ are anticomplete in $G$.
    \item $N_{P_2}(y)\subseteq P_2\setminus Z(\Sigma)$, and $Q\setminus y$ and $P_2$ are  anticomplete in $G$. 
\end{itemize}

Now, if some vertex of $Q$ has a neighbor in $P_3$, then by the minimality of $Q$, we must have $x=y$. In particular, $x$ has neighbors in $P_1$, $P_2$ and $P_3$. Since $a$ and $x$ are not adjacent in $G$ (for otherwise there is a theta in $G$), it follows that the three paths in $G$ from $a$ to $x$ with interiors in $P_1$, $P_2$ and $P_3$ form a theta in $G$ with ends $a$ and $x$, a contradiction.  We deduce that $Q$ and $P_3$ are anticomplete in $G$.  

Let $x'$ be the neighbor of $x$ in $P_1$ closest to $a$ along $P_1$ and let $x''$ be the neighbor of $x$ in $P_1$ closest to $b_1$ along $P_1$. Similarly, let $y'$ be the neighbor of $y$ in $P_2$ closest to $a$ along $P_2$ and let $y''$ be the neighbor of $y$ in $P_2$ closest to $b_2$ along $P_2$. Recall that $x',x''\in P_1\setminus Z(\Sigma)$ and $y',y''\in P_2\setminus Z(\Sigma)$. If $x'=x''$, then there is a theta in $G$ with ends $a,x'$ and paths $a\dd P_1\dd x'$, $a\dd P_2\dd y'\dd y\dd Q\dd x\dd x'$ and $a\dd P_3\dd b_3\dd b_1\dd P_1\dd x'$.  Also, if $x'$ and $x''$ are distinct and adjacent in $G$, then there is a prism in $G$ with triangles $x''xx'$ and $b_1b_2b_3$ and paths $x''\dd P_1\dd b_1$, $x\dd Q\dd y\dd y''\dd P_2\dd b_2$ and $x'\dd P_1\dd a\dd P_3\dd b_3$.  Hence, we have $x'\neq x''$ and $x'x''\notin E(G)$. But now there is a theta in $G$ with ends $a,x$ and paths $a\dd P_1\dd x'\dd x$, $a\dd P_2\dd y'\dd y\dd Q\dd x$ and $a\dd P_3\dd b_3\dd b_1\dd P_1\dd x''\dd x$, a contradiction. This completes the proof of Theorem~\ref{thm:pyramidmain}.
\end{proof}

\section{Alignments and Connectifiers}\label{sec:connectifier}

This section covers a number of definitions and a result from \cite{tw15}, which we will  use in the  proof of Theorem~\ref{thm:balancedsep}.

Let $G$ be a graph, let $P$ be a path in $G$ and let $X\subseteq V(G)\setminus P$.  We say that $(P,X)$ is an
{\em alignment} if every vertex of $X$  has at least one neighbor in $P$ and one may write $P=p_1 \dd \cdots \dd p_n$ and
$X=\{x_1, \ldots, x_k\}$ for $k,n\in \poi$ such that there exist
$1\leq i_1\leq j_1<i_2\leq j_2 < \cdots< i_{k}\leq j_k\leq n$ where 
$N_P(x_l) \subseteq p_{i_l} \dd P \dd p_{j_{l}}$ for every $l \in [k]$. 
This is a little different from the definition in \cite{tw15}, but the difference is not substantial, 
and using this definition is more convenient for us here.
In this case, we say that $x_1, \ldots, x_k$ is {\em the order on $X$ given by the
  alignment $(P,X)$}.
An alignment $(P,X)$ is {\em wide}  if 
each of $x_1, \ldots, x_{k}$ has two non-adjacent neighbors in  $P$,
{\em spiky} if  each of $x_1, \ldots, x_{k}$ has a unique neighbor in $P$ and
{\em triangular} if 
each of $x_1, \ldots, x_{k}$ has exactly two neighbors in $P$  and those neighbors are
adjacent. An alignment is {\em consistent} if it is
wide, spiky or triangular. See Figure~\ref{fig:alignment}.
\begin{figure}
    \centering
    \includegraphics[scale=0.8]{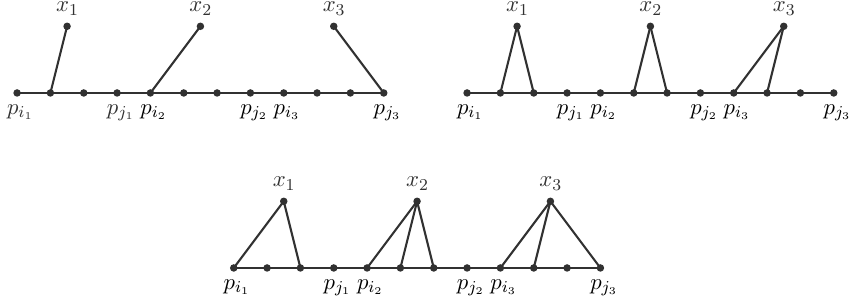}
    \caption{A consistent alignment which is spiky (top left), triangular (top right) and wide (bottom).}
    \label{fig:alignment}
\end{figure}

By a {\em caterpillar} we mean a tree $C$ with
  maximum degree three such that no two branch vertices in $C$ are adjacent, and such that there exists a path $P$ in $C$ containing all
  branch vertices of $C$.
We call a minimal such path $P$ the {\em spine} of $C$. 
  (We note that our definition of a ``caterpillar'' is
  non-standard in multiple ways.) By a \textit{subdivided star} we mean a graph isomorphic to
  a subdivision of the complete bipartite graph $K_{1,\delta}$ for some $\delta\geq 3$.
  In other words, a subdivided star is a tree with exactly one branch vertex,
  which we call its \textit{root}. For a graph $H$, a vertex $v$ of $H$ is
  said to be \textit{simplicial} if $N_H(v)$ is a clique. We denote by
  $\mca{Z}(H)$ the set of all simplicial vertices of $H$. Note that for
  every tree $T$, $\mca{Z}(T)$ is the set of all leaves of $T$. An edge
  $e$ of a tree $T$ is said to be a \textit{leaf-edge} of $T$ if $e$ is
  incident with a leaf of $T$. It follows that if $H$ is the line graph of a
  tree $T$, then $\mca{Z}(H)$ is the set of all vertices in $H$
  corresponding to the leaf-edges of $T$.

 Let $H$ be a graph that is either a caterpillar, or the line graph of a caterpillar,  or a subdivided star with root $r$,  or the line graph of  a subdivided star with root $r$.
We define an induced subgraph of $H$, denoted by $P(H)$, which we will use
throughout the paper.
If $H$ is a path (possibly of length zero), then let $P(H)=H$.
If $H$ is a caterpillar, then let $P(H)$ be the spine of $H$.
  If $H$ is the line graph of a caterpillar $C$, then let $P(H)$ be the path in $H$
  consisting of the vertices of $H$ that correspond to the edges of the spine of $C$. If $H$ is a subdivided star  with root $r$, then let $P(H)=\{r\}$.
  If $H$ is the line graph of a subdivided star $S$  with root $r$, let
  $P(H)$ be the clique of $H$ consisting of the vertices of $H$ that
  correspond to the edges of $S$ incident with $r$.
  The {\em legs} of $H$ are the components of $H \setminus P(H)$. Let $G$ be a graph and let $H$ be an induced subgraph of $G$ that is either a caterpillar, or the line graph of a caterpillar, or a subdivided star or the line graph of a subdivided star. Let $X \subseteq V(G) \setminus H$ such that every vertex of $X$ has a unique neighbor in $H$ and $N_H(X)=\mathcal{Z}(H)$ (see Figure~\ref{fig:connectifier}). We call $(H,X)$ a \textit{connectifier}. Also,
  if $H$ is a single vertex and $X \subseteq N(H)$,
we call $(H,X)$ a {\em  connectifier} as well. 
  We say that the connectifier $(H,X)$ is \textit{concentrated} if $H$ is a subdivided star or the line graph of a subdivided star or a singleton.

Let $(H,X)$ be a connectifier in $G$ which is not concentrated. So $H$ is a caterpillar or the line graph of a caterpillar. Let $S$ be the set of vertices of $H \setminus P(H)$ that have neighbors in
$P(H)$. Then $(P(H),S)$ is an alignment.
Let $s_1, \ldots, s_k$ be the corresponding order
on $S$ given by $(P(H),S)$. Now, order the vertices of $X$ as
$x_1, \ldots, x_k$ where for every $i \in [k]$, the vertex $x_i$ has a neighbor in the leg of $H$ containing
$s_i$. We say that $x_1, \ldots, x_k$ is {\em the order on $X$ given by $(H,X)$}.

The following was proved in \cite{tw15}:

\begin{theorem} [Chudnovsky, Gartland, Hajebi, Lokshtanov and Spirkl; Theorem 5.2 in  \cite{tw15}]
\label{thm:connectifier2general}
For every integer $h\in \poi$, there is a constant $f_{\ref{thm:connectifier2general}}=f_{\ref{thm:connectifier2general}}(h)\in \poi$ with the following property. Let $G$ be a connected graph.
Let $S\subseteq V(G)$ such that  $|S|\geq f_{\ref{thm:connectifier2general}}$, the graph $G \setminus S$ is connected  and every vertex of $S$ has a neighbor in $G\setminus S$. Then there exists $S' \subseteq S$ with $|S'|=h$ as well as an induced subgraph $H$ of $G \setminus S$ for which one of the following holds.
  \begin{itemize}
  \item $(H,S')$ is a connectifier, or
  \item $H$ is a path and every vertex in $S'$ has a neighbor in $H$.
\end{itemize}
\end{theorem}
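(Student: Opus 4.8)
The plan is to treat \ref{thm:connectifier2general} as a purely graph-theoretic Ramsey statement (the forbidden-subgraph hypotheses of the surrounding paper play no role here): I would build a connected subgraph of $G'=G\setminus S$ that reaches many vertices of $S$, and then repeatedly \emph{clean} it until it becomes one of the admissible induced shapes. \textbf{Reduction to distinct private neighbours.} For each $s\in S$ fix a neighbour $n(s)\in V(G')$. If some $v\in V(G')$ has $|N(v)\cap S|\ge h$, then $(\{v\},S')$ is a connectifier for any $S'\subseteq N(v)\cap S$ with $|S'|=h$ (the singleton case), and we are done. Otherwise each vertex of $G'$ equals $n(\cdot)$ for fewer than $h$ vertices of $S$, so by pigeonhole there is $S_0\subseteq S$ with $|S_0|\ge |S|/h$ whose private neighbours $N_0=\{n(s):s\in S_0\}$ are pairwise distinct; I keep the bijection $s\leftrightarrow n(s)$ throughout and treat $N_0$ as a set of \emph{terminals}. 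Since $G'$ is connected, I fix a spanning tree and repeatedly delete leaves not in $N_0$; this yields a subtree $T$ of $G'$ (a subgraph, not yet induced in $G$) all of whose leaves lie in $N_0$, with $|\mathrm{leaves}(T)|$ still of order $|S|/h$.

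\textbf{Spider-versus-caterpillar dichotomy in the tree.} Suppress the degree-two vertices of $T$ to obtain its topological tree $T^{\circ}$, which has the same leaves and all internal degrees at least three. If some vertex of $T^{\circ}$ has degree at least $h$, its incident branches yield $h$ internally disjoint $T$-paths to distinct terminal leaves, i.e.\ a subdivided-star subtree of $T$ with $h$ legs. Otherwise $T^{\circ}$ has maximum degree less than $h$, and a heavy-path decomposition (always descending into the child subtree with the most leaves, and recording at each branch vertex on the heavy path one leaf of a sibling subtree) produces a caterpillar subtree of $T$ with at least $\log_h(\#\mathrm{leaves})$ legs; passing to every other branch vertex makes the branch vertices pairwise non-adjacent as the caterpillar definition requires. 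Thus, provided $|S|\ge h\cdot h^{\,2h'+1}$ for a target leg-count $h'\gg h$ (chosen to absorb the losses below), I obtain a subtree $H_0$ of $T$ that is either a subdivided star or a caterpillar, with $h'$ terminals sitting at its simplicial vertices.

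\textbf{From the subtree to an induced connectifier.} The subtree $H_0$ need not be induced in $G'$, and converting it into one of the four admissible induced shapes is the technical heart of the argument, which I expect to be the main obstacle. First I replace the spine $P(H_0)$ and each leg by induced shortest subpaths with the same endpoints, removing chords internal to the spine or to a single leg. Next I control how each leg attaches to the spine: after discarding a bounded fraction of the legs one may assume, in the alignment terminology of Section~\ref{sec:connectifier}, that the leg-bases form a \emph{consistent} alignment with $P(H_0)$, i.e.\ they are all \emph{spiky}, all \emph{triangular}, or all \emph{wide}. In the spiky case each leg meets the spine at a single vertex and $H_0$ is an induced caterpillar (or subdivided star), giving the connectifier directly. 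In the triangular case each leg meets the spine at an edge, and the induced subgraph obtained is exactly the line graph of a caterpillar (respectively of a subdivided star) -- the spine edges become $P(H)$ and each attaching leg-edge sees two consecutive spine edges -- so it is again an admissible connectifier. The wide case, together with any residual adjacency between distinct legs, is routed to the second conclusion: a long induced subpath of the spine together with the attached leg-bases already furnishes an induced path $H$ to which $h$ vertices of $S'$ have a neighbour. The one remaining subtlety is selecting, among the $h'$ legs, a subfamily that is pairwise anticomplete off the spine; here I would argue by a further win/win, namely that if the auxiliary ``legs-interact'' graph has a large independent set one keeps those legs, while if it is dense the edges between legs let one extract a long induced path with $h$ attached terminals and again land in the path conclusion.

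\textbf{Bookkeeping.} Each reduction -- distinct private neighbours (factor $h$), the tree dichotomy (factor $h^{\,2h'+1}$), the three-way alignment pigeonhole (factor $3$), and the selection of pairwise-clean legs (a further Ramsey factor) -- costs only a multiplicative constant depending on $h$. Composing them produces a single admissible value of $f_{\ref{thm:connectifier2general}}(h)$, completing the proof.
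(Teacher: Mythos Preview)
The paper does not contain its own proof of Theorem~\ref{thm:connectifier2general}: the result is quoted from \cite{tw15} and used as a black box, so there is nothing in this paper to compare your argument against line by line.

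As a standalone sketch, your outline has the right shape (minimal tree on terminals, star-versus-caterpillar dichotomy via the topological tree, then cleaning), but there is one requirement of the connectifier definition that you never engage with: each vertex of $S'$ must have a \emph{unique} neighbour in $H$, and that neighbour must lie in $\mathcal{Z}(H)$. Your construction only guarantees that $s$ is adjacent to its chosen terminal $n(s)$, which you place at a leaf of $H$; nothing prevents $s$ from having further neighbours elsewhere in $H$ (on the spine, on another leg, or deeper in its own leg). Fixing this is not just bookkeeping: one typically needs a further dichotomy (many $s$ with a neighbour on the spine gives the path outcome directly; otherwise trim legs and thin $S'$ so that each surviving $s$ sees only the tip of its own leg), and that step interacts with your ``legs pairwise anticomplete'' argument. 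Until you address the unique-neighbour condition explicitly, the connectifier conclusion is not established.

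A second, smaller point: in your caterpillar step you record ``one leaf of a sibling subtree'' at each heavy-path branch vertex, but you need the attaching vertices along the spine to have degree exactly three in $H$, and the legs to be internally anticomplete to the spine once you pass to the induced picture; your alignment case analysis (spiky/triangular/wide) is the right tool, but it applies to the \emph{leg-base-versus-spine} attachment, and you should say why the wide case survives passage to induced shortest paths without re-introducing leg--leg interactions.
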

\begin{figure}[t!]
    \centering
    \includegraphics[scale=0.8]{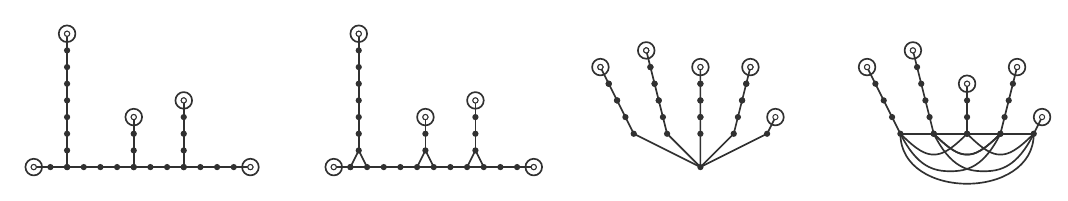}
    \caption{Examples of a connectifier. Circled nodes represent the vertices in $X$.}
    \label{fig:connectifier}
\end{figure}

\section{Amiability}\label{sec:amiable}
 The two notions of ``amiability'' and ``amicability,'' first introduced in \cite{tw15}, are at the heart of the proof of Theorem~\ref{thm:balancedsep}. We deal with the former in this section and leave the latter for the next one.

Let $s\in \poi$ and let $G$ be a graph. An \textit{$s$-trisection in $G$} is a separation $(D_1,Y,D_2)$ in $G$ such that the following hold.
\begin{itemize}
   \item $Y$ is a stable set with $|Y|=s$.
   \item $D_1$ and $D_2$ are components of $G\setminus Y$ with $N(D_1)=N(D_2)=Y$.
  \item $D_1$ is a path and for every $y \in Y$ there exists $d_y\in D_1$ such
    that $N_Y(d_y)=\{y\}$. 
  \end{itemize}
(The reader may notice that we will never use the second condition in the third bullet point. It was however necessary in \cite{tw15}, so we keep it for easier cross-referencing.)

We say that a graph class $\mca{G}$ is \textit{amiable} if there is a function $\sigma:\poi\rightarrow\poi$ with the following property. Let $x\in \poi$, let $G \in \mca{G}$  and let $(D_1,Y,D_2)$ be a $\sigma(x)$-trisection in $G$. Then there exist $H\subseteq D_2$ and $X\subseteq Y$ with $|X|=x$ such that the following hold.
\begin{itemize}
\item $(D_1,X)$ is a consistent alignment.
\item $(H,X)$ is either a connectifier or a consistent alignment.
\item If $(H,X)$ is not a concentrated connectifier, then
  the orders given on $X$ by $(D_1,X)$ and by $(H,X)$ are the same.
\end{itemize}
In this case, we say that  \textit{$H$ and $X$ are given by amiability}. The main result of this section is the following:
\begin{theorem} 
\label{thm:amiable}
For every $t\in \poi$, the class $\mca{C}_t$ is amiable. Moreover, with notation as in the definition of amiability, if $(H,X)$ is a connectifier, then we have $|H|>1$.
\end{theorem}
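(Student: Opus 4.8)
The plan is to prove amiability of $\mca{C}_t$ by taking a large trisection $(D_1,Y,D_2)$ and using the connectifier-extraction machinery of Theorem~\ref{thm:connectifier2general} inside the connected graph $G[D_2\cup Y]$ (or a suitable contraction of it), treating $Y$ as the set $S$ whose vertices hang off the connected remainder $D_2$. To set this up, I would first observe that since $(D_1,Y,D_2)$ is a separation with $N(D_1)=N(D_2)=Y$ and $Y$ is stable, every vertex of $Y$ has a neighbor in $D_2$, and I may pass to a connected induced subgraph of $D_2$ witnessing this (or argue $D_2$ is connected after a harmless reduction). Setting $\sigma(x)=f_{\ref{thm:connectifier2general}}(h)$ for an appropriately chosen $h=h(x,t)$, Theorem~\ref{thm:connectifier2general} then yields a subset $S'\subseteq Y$ of size $h$ together with an induced subgraph $H\subseteq D_2$ such that either $(H,S')$ is a connectifier or $H$ is a path meeting every vertex of $S'$.

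In the path case, I would extract a consistent sub-alignment: given that $H$ is a path and each of the $h$ chosen vertices of $S'$ has a neighbor on $H$, a Ramsey-type pigeonhole on the ``attachment type'' of each vertex (unique neighbor, two non-adjacent neighbors, or two adjacent neighbors) combined with the ordering along the path produces, after passing to a subset of size $x$, a wide, spiky, or triangular alignment $(H,X)$; here the theta/prism-freeness is what rules out pathological attachment patterns and lets me force the three clean cases. In the connectifier case, I would similarly refine: $(H,X)$ is already a connectifier by definition, and I would use the induced ordering on $X$ given by $(H,X)$ and the alignment structure of $P(H)$ to match it against the order given by $(D_1,X)$, possibly re-extracting a subset so that the two orders coincide (again a pigeonhole/Ramsey argument on the linear orders). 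Throughout, the alignment $(D_1,X)$ on the path side $D_1$ is obtained directly, since $D_1$ is a path and $N(D_1)=Y\supseteq X$, and I may further thin $X$ to make $(D_1,X)$ consistent as well.

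The ``moreover'' clause, that $|H|>1$ whenever $(H,X)$ is a connectifier, I would handle by ruling out the degenerate single-vertex (concentrated) connectifier using $K_{1,t}$-freeness: if $H$ were a single vertex complete to a large $X$, that vertex together with $X$ would contain $K_{1,t}$ once $x\geq t$, contradicting membership in $\mca{C}_t$. So by choosing $h$ (and hence $x$) at least $t$ and discarding the singleton outcome, every connectifier we keep has $|H|>1$. This is also where the class-specific hypothesis enters most visibly, whereas the earlier steps are largely graph-theoretic consequences of Theorem~\ref{thm:connectifier2general} and pigeonholing.

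The main obstacle I anticipate is the bookkeeping needed to guarantee that the orders given by $(D_1,X)$ and by $(H,X)$ agree, as required by the third bullet of the definition of amiability. Matching two linear orders on the same set $X$ — one coming from the path $D_1$ and one from the spine $P(H)$ of the connectifier — requires careful simultaneous Ramsey extraction so that a single subset $X$ is monotone for both orders at once; this forces $h$ to be chosen large relative to $x$ via iterated applications of Ramsey's theorem, and verifying that the consistent-alignment type is preserved under this thinning is where most of the technical care will go.
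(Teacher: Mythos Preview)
Your overall skeleton matches the paper's proof: apply Theorem~\ref{thm:connectifier2general} to $Y$ inside $D_2$, split into the connectifier and path cases, use Erd\H{o}s--Szekeres to synchronize the two linear orders on $X$, and kill the singleton connectifier via $K_{1,t}$-freeness. However, there is a real gap in the step you describe as ``a Ramsey-type pigeonhole on the attachment type \ldots\ combined with the ordering along the path.'' Pigeonholing on whether a vertex has one neighbor, two adjacent neighbors, or two non-adjacent neighbors in the path only upgrades an \emph{alignment} to a \emph{consistent} alignment; it does not produce an alignment in the first place. The difficulty is that for two vertices $y,y'\in Y$, the subpaths $P_y,P_{y'}$ of $D_1$ (or of $H$, in the path outcome) spanned by their neighbors may overlap, and nothing in your outline rules this out. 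Your claim that the alignment $(D_1,X)$ is ``obtained directly'' has the same problem.

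The paper supplies the missing idea as a separate lemma (Lemma~\ref{lem:intervalgraph}): form the interval graph on $\{P_y:y\in Y\}$ and use perfectness to extract either a large stable set (which \emph{is} an alignment, and can then be thinned by a factor of three to a consistent one) or a large clique. In the clique case every $P_y$ contains a common vertex $p$; since $p$ has fewer than $t$ neighbors in $Y$ (from $K_{1,t}$-freeness), two vertices $y,y'$ in the clique are non-adjacent to $p$ and hence each has a neighbor on both components of $P\setminus p$, yielding two internally disjoint $y$--$y'$ paths inside $P$. A third $y$--$y'$ path through the \emph{other} side of the trisection---this is exactly where the separation structure $(D_1,Y,D_2)$ is exploited---then gives a theta, a contradiction. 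This interval-graph-plus-theta argument is invoked twice: once to make $(D_1,S)$ a consistent alignment before calling Theorem~\ref{thm:connectifier2general}, and once more on the path $H$ in the second outcome. Without it, neither of your alignment extractions goes through.
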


In order to prove Theorem~\ref{thm:amiable}, first we prove the following lemma:

\begin{lemma}\label{lem:intervalgraph}
Let $d,s\in \poi$, let $G$ be a theta-free graph and let $Y$ be a stable set in $G$ of cardinality $3s(d+1)$. Let $P$ be a path in $G\setminus Y$ such that every vertex in $Y$ has a neighbor in $P$, and each vertex of $P$ has fewer than $d$ neighbors in $Y$. Assume that for every two vertices $y,y'\in Y$, there is a path $R$ in $G$ from $y$ to $y'$ such that $P$ and $R^*$ are disjoint and anticomplete in $G$. Then there is an $s$-subset $S$ of $Y$ such that $(P,S)$ is a consistent alignment.
\end{lemma}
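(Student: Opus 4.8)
The plan is to recast the conclusion as the problem of finding a large independent set in an interval graph, and then to bound that interval graph's clique number using theta-freeness. For $y\in Y$ let $I(y)=[\ell_y,r_y]$ be the \emph{span} of $y$, where $p_{\ell_y}$ and $p_{r_y}$ are the leftmost and rightmost neighbours of $y$ on $P=p_1\dd\cdots\dd p_n$; this is well defined since every vertex of $Y$ has a neighbour in $P$. The key observation is that a subset $S\subseteq Y$ whose spans are \emph{pairwise disjoint} is automatically an alignment: ordering the members of $S$ by the position of their spans along $P$ and taking $[i_l,j_l]=I(x_l)$ witnesses the required inequalities $j_l<i_{l+1}$. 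Moreover, since every $y\in Y$ has a neighbour in $P$, each $y$ is of exactly one of the types spiky, triangular, wide (if $y$ has two non-adjacent neighbours in $P$ it is wide; otherwise $N_P(y)$ is a clique of the induced path $P$, hence a vertex or an edge, giving spiky or triangular). By pigeonhole one of the three type-classes, call it $Y'$, satisfies $|Y'|\ge |Y|/3=s(d+1)$, and an alignment all of whose vertices lie in $Y'$ is automatically consistent. Hence it suffices to find $s$ vertices of $Y'$ with pairwise disjoint spans.

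To do this I would form the interval graph $\mca{I}$ on vertex set $Y'$ in which $y,y'$ are adjacent exactly when $I(y)\cap I(y')\neq\varnothing$, and bound its clique number. By the Helly property of intervals, a clique of $\mca{I}$ is a family of spans passing through a common point $p_m$, so it is enough to show that at most $d$ vertices of $Y'$ have $p_m$ in their span. When $Y'$ is spiky or triangular this is immediate: there $I(y)=N_P(y)$, so $p_m\in I(y)$ forces $p_m\in N_P(y)$, and there are fewer than $d$ such $y$ because $p_m$ has fewer than $d$ neighbours in $Y$.

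The heart of the argument is the wide case, where a vertex $y$ may satisfy $p_m\in I(y)$ with $p_m\notin N_P(y)$; call such $y$ a \emph{strict spanner} of $p_m$. Since at most $d-1$ wide vertices are actually adjacent to $p_m$, it suffices to prove that $p_m$ has at most one strict spanner, which I would do by producing a theta from any two strict spanners $y,y'$. The ends of the theta will be $y,y'$ (non-adjacent, as $Y$ is stable), and its three paths are the connecting path $R$ given by hypothesis, whose interior $R^*$ is disjoint from and anticomplete to $P$, together with two induced paths through $P$, one using only indices less than $m$ and one only indices greater than $m$. To build the left path without chords I would let $\lambda_y$ be the largest index of a neighbour of $y$ that is at most $m$ (so $y$ has no neighbour in $(\lambda_y,m]$, and $\lambda_y<m$ since $p_m\notin N_P(y)$), assume $\lambda_y\le\lambda_{y'}$, and let $\nu\le\lambda_{y'}<m$ be the smallest index of a neighbour of $y'$ that is at least $\lambda_y$; then on $p_{\lambda_y}\dd P\dd p_{\nu}$ the vertex $y$ sees only $p_{\lambda_y}$ and $y'$ sees only $p_{\nu}$, so $y\dd p_{\lambda_y}\dd P\dd p_{\nu}\dd y'$ is an induced path with all interior indices below $m$. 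Applying the identical construction to the reversal of $P$ yields a right path with all interior indices above $m$. The two $P$-paths then have interiors lying among indices $<m$ and $>m$ respectively, hence are anticomplete to each other, while $R^*$ is anticomplete to both and disjoint from $P$; as each of the three paths has length at least two, this is a theta, a contradiction.

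Finally, with $\omega(\mca{I})\le d$ in hand, a left-to-right sweep colours the spans with at most $d$ colours, each colour class being a set of pairwise disjoint spans; the largest class has at least $|Y'|/d\ge s(d+1)/d>s$ members, and any $s$ of them form the desired consistent alignment. The genuinely delicate step, and the one I expect to need the most care, is the chord-free construction of the two $P$-paths in the wide case: the choice of $\lambda_y$ and $\nu$ (and their mirror images on the right) is precisely what prevents the arbitrarily many possible neighbours of $y$ and $y'$ along $P$ from creating chords that would spoil the theta.
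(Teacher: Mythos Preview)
Your proof is correct and follows essentially the same route as the paper: both build the interval graph of spans along $P$, bound its clique number by~$d$ via the identical theta argument (two vertices of $Y$ that strictly span a common $p_m$ together with the hypothesised path $R$ form a theta), and extract the alignment from a large independent set using perfection. The only cosmetic differences are that you pigeonhole into the three attachment types \emph{before} the interval-graph step while the paper does so \emph{after}, and you spell out the chord-free $P$-paths explicitly where the paper simply notes that $y,y'$ each have neighbours in both components of $P\setminus\{p_m\}$.
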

\begin{proof}
  For every vertex $y\in Y$, let $P_y$ be the (unique) path in $P$ with the property that $y$ is complete to the ends of $P_y$ and anticomplete to $P\setminus P_y$. Let $I$ be the graph with $V(I)=Y$ such that two distinct vertices $y,y'\in Y$ are adjacent in $I$ if and only if $P_y\cap P_{y'}\neq \emptyset$. Then $I$ is an interval graph and so $I$ is perfect \cite{Golumbic}. Since $|V(I)|=3s(d+1)$, it follows that $I$ contains either a clique of cardinality $d+1$ or a stable set of cardinality $3s$. 
    
  Assume that $I$ contains a clique of cardinality $d+1$. Then there exists $C\subseteq Y$ with $|C|=d+1$ and $p\in P$ such that $p \in P_y$ for every $y \in C$. Since $p\in P$ has fewer than $d$ neighbors in $C\subseteq Y$, it follows that there are at least two vertices $y,y'\in C\setminus N(p)$. Since $p\in P_y\cap P_{y'}$, it follows that $P\setminus \{p\}$ has two components, and each of $y$ and $y'$ has a neighbor in each component of $P\setminus \{p\}$.  It follows that there are two paths $P_1$ and $P_2$ from $y$ to $y'$ with disjoint and anticomplete interiors contained in $P$. On the other hand, there is a path $R$ in $G$ from $y$ to $y'$ such that $P$ and $R^*$ are disjoint and anticomplete in $G$. It follows that $P_1,P_2$ and $R$ are pairwise internally disjoint and anticomplete. But now there is a theta in $G$ with ends $y,y'$ and paths $P_1, P_2, R$, a contradiction.

We deduce that $I$ contains a stable set $S'$ of cardinality $3s$. From the definition of $I$, it follows that $(P,S')$ is an alignment. Hence, since every vertex in $S'$ has one, two adjacent, or at least two non-adjacent neighbors in $P$, there exists $S\subseteq S'\subseteq Y$ with $|S|=s$ such that $(P,S)$ is a consistent alignment. This completes the proof of Lemma~\ref{lem:intervalgraph}.
\end{proof}

\begin{proof}[Proof of Theorem~\ref{thm:amiable}]
 For every $x\in \poi$, let 
$$s=f_{\ref{thm:connectifier2general}}(3x^2(t+1))$$
and let
 $$\sigma(x)=3s(t+1).$$
  We will show that $\mca{C}_t$ is amiable with respect to $\sigma:\poi\rightarrow \poi$ as defined above. Let $x\in \poi$, let $G \in \mca{C}_t$  and let $(D_1,Y,D_2)$ be a $\sigma(x)$-trisection in $G$. Then $Y$ is a stable set of cardinality $3s(t+1)$, $D_1$ is a path in $G\setminus Y$ and every vertex in $Y$ has a neighbor in $D_1$. Moreover, since $G$ is $K_{1,t}$-free, no vertex in $D_1$ has $t$ or more neighbors in $Y$, and since $N(D_2)=Y$, it follows that for every two vertices $y,y'\in Y$, there is a path $R$ in $G$ from $y$ to $y'$ with $R^*\subseteq D_2$, and so $D_1$ and $R^*$ are disjoint and anticomplete in $G$. By Lemma~\ref{lem:intervalgraph}, there exists $S\subseteq Y$ with $|S|=s$ such that $(D_1,S)$ is a consistent alignment.

 Now, we show that there exists $H\subseteq D_2$ as well as an $x$-subset $X$ of $S\subseteq Y$ such that $H$ and $X$ satisfy the definition of amiability. Since $D_2$ is connected and every vertex in $S\subseteq Y$ has a neighbor in $D_2$, it follows that $D_2\cup S$ is connected too. Since $|S|=s=f_{\ref{thm:connectifier2general}}(3x^2(t+1))$, it follows from Theorem~\ref{thm:connectifier2general} that  there exists $S' \subseteq S$ with $|S'|=3x^2(t+1)$ and an induced subgraph $H_2$ of $D_2$ for which one of the following holds:
  \begin{itemize}
  \item $(H_2,S')$ is a connectifier.
  \item $H_2$ is a path and every vertex of $S'$ has a neighbor in $H_2$.
\end{itemize}
 First, assume that $(H_2,S')$ is a concentrated connectifier. Then, since $|S'|\geq t$ and $G$ is $K_{1,t}$-free, it follows that $|H_2|>1$. Now, since $|S'|\geq x$, we may choose a concentrated connectifier $(H,X)$ where $X$ is an $x$-subset of $S'\subseteq S\subseteq Y$ and $H$ is an induced subgraph $H_2\subseteq D_2$ with $|H|>1$. In particular, $H$ and $X$ satisfy the definition of amiability.

 Next, assume that $(H_2,S')$ is a connectifier which is not concentrated. Consider the orders on $S'$ given by $(D_1, S')$ and by $(H_2,S')$. Since $|S'|\geq x^2$, it follows from the Erd\H{o}s-Szekeres theorem \cite{ES} that there is an $x$-subset $X$ of $S'\subseteq S\subseteq Y$ as well as an induced subgraph $H$ of $H_2\subseteq D_2$
 such that:
 \begin{itemize}
 \item $(D_1,X)$ is a consistent alignment (because $(D_1,S)$ is);
\item $(H,X)$ is a connectifier which is not concentrated; and
\item The orders given on $X$ by $(D_1,X)$ and by $(H,X)$ are the same.
 \end{itemize}
It follows that $H$ and $X$ satisfy the definition of amiability.

Finally, assume that $H_2$ is a path and every vertex in $S'$ has a neighbor in $H_2$. Let $H=H_2$. Recall that $(D_1,S')$ is an alignment. In particular, $S'$ is a stable set of cardinality $3x^2(t+1)$, and since $G$ is $K_{1,t}$-free, no vertex in $H_2$ has $t$ or more neighbors in $S'$. Also, for every two vertices $y,y'\in S$, there is a path $R$ in $G$ from $y$ to $y'$ such that $R^*\subseteq D_1$, and so $H$ and $R^*$ are disjoint and anticomplete in $G$.  By Lemma~\ref{lem:intervalgraph}, there exists $S''\subseteq S'\subseteq S$ with $|S''|=x^2$ such that $(H,S'')$ is a consistent alignment. Consider the order on $S''$ given by $(D_1, S'')$ and by $(H,S'')$. Since $|S''|=x^2$, it follows from the Erd\H{o}s-Szekeres theorem \cite{ES} that there is an $x$-subset $X$ of $S''\subseteq S'\subseteq S\subseteq Y$ such that
 such that:
 \begin{itemize}
 \item $(D_1,X)$ is a consistent alignment (because $(D_1,S)$ is);
\item $(H,X)$ is a consistent alignment (because $(H,S'')$ is); and
\item The orders given on $X$ by $(D_1,X)$ and by $(H,X)$ are the same.
 \end{itemize}
So $H$ and $X$ satisfy the definition of amiability. This completes the proof of Theorem~\ref{thm:amiable}
\end{proof}

\section{Amicability}\label{sec:amicable}
Here we complete the proof of Theorem~\ref{thm:balancedsep}, beginning with the following definition.

Let $m\in \poi$ and let $\mca{G}$ be a graph class. We say that $\mca{G}$ is \textit{$m$-amicable} if $\mca{G}$ is amiable and the following holds. Let $\sigma:\poi\rightarrow \poi$ be as in the definition of amiability for $\mca{G}$. Let $G\in \mca{G}$ and let $(D_1,Y,D_2)$ be a $\sigma(7)$-trisection in $G$. Let $X=\{x_1,\ldots, x_7\}\subseteq Y$ be given by amiability such that $x_1, \ldots, x_7$ is the order on $X$ given by $(D_1,X)$. Let $D_1=d_1 \dd \cdots \dd d_k$ such that traversing $D_1$ from $d_1$ to $d_k$, the first vertex in $D_1$ with a neighbor in $X$ is a neighbor of $x_1$. Let $i\in [k]$ be maximum such that $x_1$ is adjacent to $d_i$ and let $j\in [k]$ be minimum such that $x_7$ is adjacent to $d_j$. Then there exists a subset $Z$ of  $D_2 \cup \{d_{k}:i+2\leq k\leq j-2\} \cup  \{x_4\}$
    with $|Z| \leq m$  such that $N[Z]$ separates  $d_i$ and $d_j$. It follows that $N[Z]$ separates $d_1 \dd D_1 \dd d_i$ and
    $d_j \dd D_1 \dd d_k$ (see Figure~\ref{fig:amicable}).
    \begin{figure}[t!]
        \centering
        \includegraphics[scale=0.7]{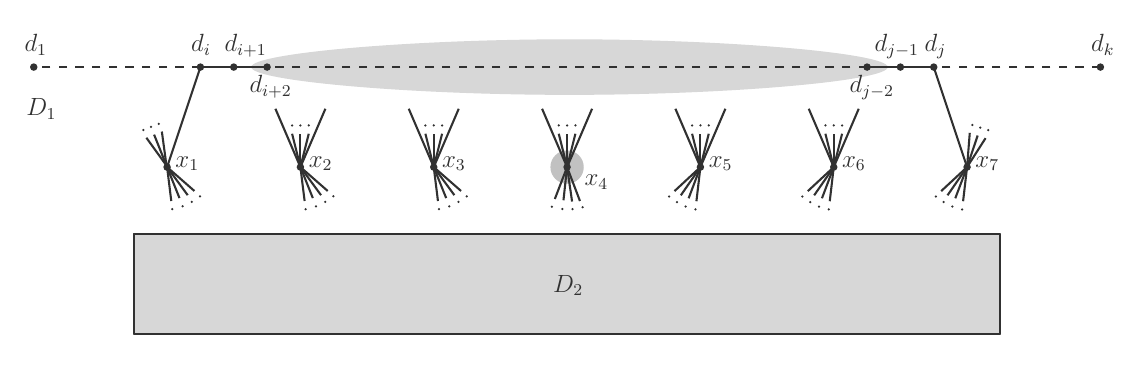}
        \caption{Amicability -- Note that $Z$ is contained in the highlighted set.}
        \label{fig:amicable}
    \end{figure}
    \begin{figure}[t!]
    \centering
    \includegraphics[scale=0.7]{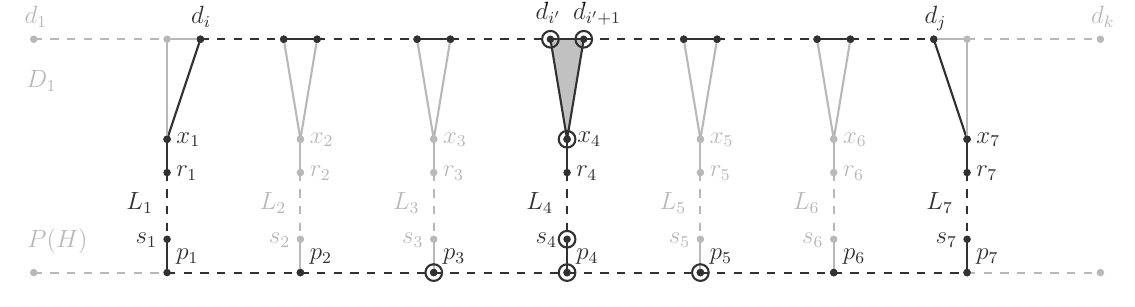}
    \caption{$H$ is a caterpillar. Circled nodes depict the vertices in $Z(\Sigma)$.}
    \label{fig:caterpillarcase}
\end{figure}
    
We prove that:
  
  \begin{theorem}
    \label{thm:amicable}
   For every $t\in \poi$, the class $\mca{C}_t$ is $\max\{2t,7\}$-amicable.
    \end{theorem}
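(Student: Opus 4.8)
The plan is to prove $\max\{2t,7\}$-amicability by analyzing the cases produced by amiability, using the separator theorems of Sections~\ref{sec:wheel} and~\ref{st:strip}. We are given a $\sigma(7)$-trisection $(D_1,Y,D_2)$ and a set $X=\{x_1,\dots,x_7\}\subseteq Y$ together with $H\subseteq D_2$ given by amiability, and we must find a small set $Z$ inside $D_2\cup\{d_k : i+2\le k\le j-2\}\cup\{x_4\}$ whose closed neighborhood separates $d_i$ from $d_j$ along $D_1$. The guiding idea is that the middle vertices $x_2,\dots,x_6$ of $X$, together with $D_1$ and the structure $H$, create a wheel or a pyramid, and then $Z$ will be taken to be (a bounded subset realizing) the corresponding set $Z(W)$ or $Z(\Sigma)$, so that Theorem~\ref{thm:wheel1}, Theorem~\ref{thm:wheel2} or Theorem~\ref{thm:pyramidmain} does the separating.

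First I would split according to which outcome of amiability holds. The easiest case is when $(D_1,X)$ is a \emph{spiky} alignment: then several of the $x_i$'s are single-neighbor attachments to the path $D_1$, and together with a connecting structure through $D_2$ they build a theta or a pyramid on the subpath $d_i\dd D_1\dd d_j$; here I expect $Z$ to be essentially $Z(\Sigma)$ for a pyramid whose apex lies in $H$ (if $H$ is concentrated) or whose three legs run through a middle vertex such as $x_4$, which is exactly why $x_4$ is allowed into $Z$. When $(D_1,X)$ is \emph{triangular} or \emph{wide}, each $x_i$ has two adjacent (resp. two non-adjacent) neighbors on $D_1$, and the hole formed by $D_1$ together with a path through $D_2$ plus one of the $x_i$'s as a hub yields a wheel; I would check whether this wheel is special (a short pyramid) or non-special and whether its sectors/long sectors are long enough, choosing the middle index so that $a,b$ (playing the roles of $d_i$-side and $d_j$-side interior vertices) lie in distinct sectors and outside $N[Z(W)]$, then invoke Theorem~\ref{thm:wheel1} or Theorem~\ref{thm:wheel2}. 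In each subcase $|Z(W)|$ and $|Z(\Sigma)|$ are bounded by a constant ($\le 7$ for a pyramid, and $N_H[\cdot]$ contributes the factor involving $t$ because a hub can have up to $t-1$ hole-neighbors, giving the $2t$ in the bound).

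The more delicate branch is when amiability returns a genuine connectifier $(H,X)$ with $|H|>1$ that is a caterpillar or a line graph of a caterpillar (Figure~\ref{fig:caterpillarcase}). Here $H$ contributes the long, structured part of a hole or of a pyramid whose base or apex sits inside $H$, while $D_1$ supplies the other side; the requirement from the definition of amiability that the orders on $X$ given by $(D_1,X)$ and by $(H,X)$ agree is precisely what guarantees that the attachments interleave consistently so that a theta/prism or wheel/pyramid is actually induced rather than short-circuited. I would use the spine $P(H)$ as one long path, attach through the $x_i$'s, and identify the resulting object; the separator $Z$ is then $Z$ of that object intersected with the allowed region, and one checks $d_i,d_j$ land in distinct sectors/paths and avoid $N[Z]$.

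The main obstacle I expect is the bookkeeping that certifies the wheel or pyramid is \emph{induced} and has its parameters (hole length $\ge 7$ for Theorem~\ref{thm:wheel1}, long sectors of length $\ge 3$ for Theorem~\ref{thm:wheel2}, path lengths $\ge 1$ for Theorem~\ref{thm:pyramidmain}) in the required ranges, and that the two target vertices $d_i,d_j$ genuinely lie in \emph{distinct} sectors or paths and outside $N[Z(W)]$ or $N[Z(\Sigma)]$. This is why we have seven vertices $x_1,\dots,x_7$ rather than three: the outer indices $x_1,x_7$ pin down $d_i,d_j$, while the middle indices give enough room (using $K_{1,t}$-freeness to bound how many $x$'s can crowd a single vertex, and the agreement of the two orders to control interleaving) to select a hub and excise a subpath long enough to meet the length hypotheses, with the leftover ``short'' portions absorbed into $N[Z]$. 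Once the correct configuration is isolated in each case, the separation itself is immediate from the cited theorem, and the size bound $|Z|\le\max\{2t,7\}$ follows from the explicit description of $Z(W)$ and $Z(\Sigma)$.
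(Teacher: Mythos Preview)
Your plan is correct and matches the paper's approach: case-split on the structure $(H,X)$ returned by amiability, in each case build an explicit pyramid or wheel from $x_4$, $H$, and the subpath $d_i\dd D_1\dd d_j$, then invoke Theorem~\ref{thm:wheel1}, \ref{thm:wheel2}, or \ref{thm:pyramidmain}, with the bound $\max\{2t,7\}$ coming exactly as you say from $|Z(\Sigma)|\le 7$ in the pyramid cases and $|N_C[x_4]|\le 2t$ in the non-special wheel case. The paper's organization differs only cosmetically: it branches first on the type of $H$ (caterpillar, line graph of a caterpillar, subdivided star, line graph of a subdivided star, or path) and only then on the type of the alignment $(D_1,X)$, which makes each pyramid/wheel construction and the verification of the length and sector hypotheses completely explicit.
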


    \begin{proof}
    By Theorem~\ref{thm:amiable}, $\mca{C}_t$ is amiable, and with notation as in the definition of amiability, if $(H,X)$ is a connectifier, then we have $|H|>1$. Let $\sigma:\poi\rightarrow \poi$ be as in the definition of amiability for $\mca{C}_t$. Let $G\in \mca{C}_t$ and let $(D_1,Y,D_2)$ be a $\sigma(7)$-trisection in $G$. Let $X=\{x_1,\ldots, x_7\}\subseteq Y$ be given by amiability such that $x_1, \ldots, x_7$ is the order on $X$ given by the consistent alignment $(D_1,X)$. Let $D_1=d_1 \dd \cdots \dd d_k$ and $i,j\in [k]$ be as in the definition of amicability. Our goal is to show that there exists a subset $Z$ of  $D_2 \cup \{d_{k}:i+2\leq k\leq j-2\} \cup  \{x_4\}$
    with $|Z| \leq \max\{2t,7\}$  such that $N[Z]$ separates  $d_i$ and $d_j$.

Let $i'\in [k]$ be minimum such that $x_4$ is adjacent to $d_{i'}$, let $j'\in [k]$ be maximum such that $x_4$ is adjacent to $d_{j'}$, and let $H$ be the induced subgraph of $D_2$ given by amiability. It follows that
$i+2<i'\leq j'<j-2$, $(H,X)$ is either a connectifier with $|H|>1$ or a consistent alignment, and if $(H,X)$ is not a concentrated connectifier, then $x_1,\ldots,x_7$ is the order on $X$ given by $(H,X)$. 
When $(H,X)$ is a connectifier with $|H|>1$, then for each $l\in [7]$, let $r_l$ be the unique neighbor of $x_l$ in $H$ (so $r_l\in \mca{Z}(H)$) and let $L_l$ be the (unique) shortest path in $H$ from $r_l$ to a vertex $s_l\in N_H[P(H)]$. It follows that $s_l\in H\setminus P(H)$ unless $H$ is the line graph of a subdivided star where not all edges of the star are subdivided, in which case we have $r_l=s_l\in P(H)=\mca{Z}(H)=H$.
\medskip

First, consider the case where  $H$ is a caterpillar. It follows that for each $l\in [7]$, we have $s_l\in H\setminus P(H)$ and $s_l$ has a unique neighbor $p_l\in P(H)$. Since $G$ is theta-free, it follows that $(D_1,X)$ is triangular, and so $j'=i'+1$ (see Figure~\ref{fig:caterpillarcase}). Let $\Sigma$ be the pyramid with apex $p_4$, base $\{d_{i'},x_4,d_{j'}\}$ and paths 
$$P_1=p_4\dd P(H)\dd p_1\dd s_1\dd L_1\dd r_1\dd x_1\dd d_i\dd D_1\dd d_{i'};$$
$$P_2=p_4\dd s_4\dd L_4\dd r_4\dd x_4;$$
$$P_3=p_4\dd P(H)\dd p_7\dd s_7\dd L_7\dd r_7\dd x_7\dd d_j\dd D_1\dd d_{j'}.$$
Then $Z(\Sigma)$ is a $7$-subset of $D_2 \cup \{d_{k}:i+2\leq k\leq j-2\} \cup  \{x_4\}$. Moreover, we have $d_i\in P_1^*\setminus N[Z(\Sigma)]$ and $d_j\in P_3^*\setminus N[Z(\Sigma)]$. Therefore, by Theorem~\ref{thm:pyramidmain}, $N[Z(\Sigma)]$ separates $d_i$ and $d_j$, as desired.

\begin{figure}[t!]
    \centering
    \includegraphics[scale=0.7]{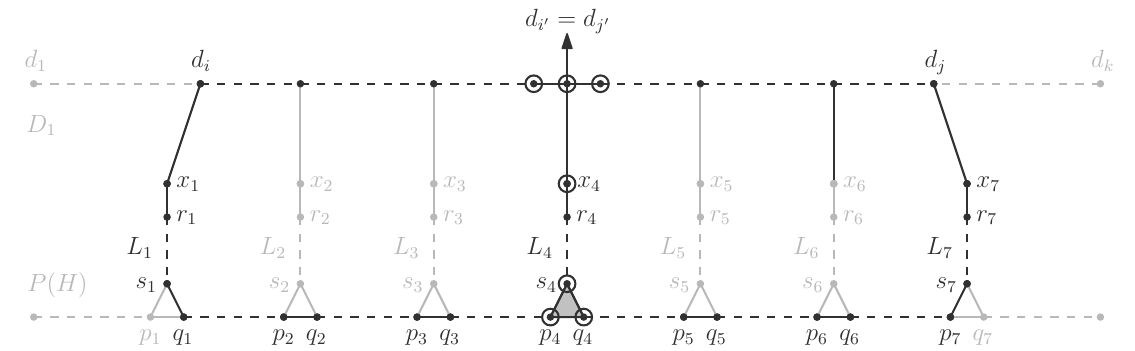}
    \caption{$H$ is the line graph of a caterpillar and $(D_1,X)$ is spiky. Circled nodes represent the vertices in $Z(\Sigma)$.}
    \label{fig:linecaterpillarspikycase}
\end{figure}
\begin{figure}[t!]
    \centering
    \includegraphics[scale=0.7]{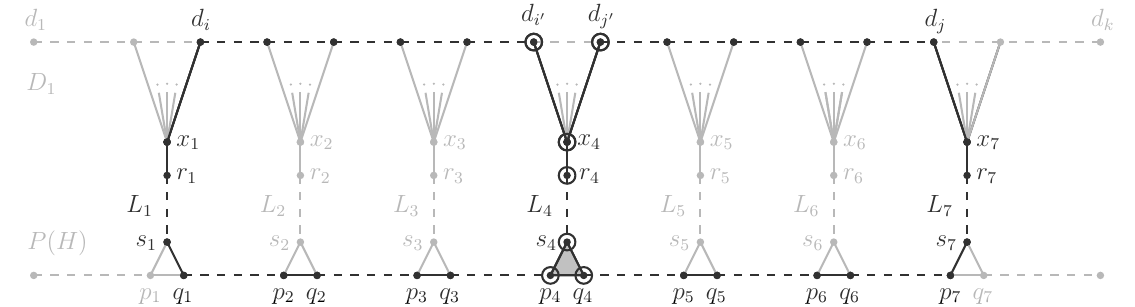}
    \caption{$H$ is the line graph of a caterpillar and $(D_1,X)$ is wide. Circled nodes represent the vertices in $Z(\Sigma)$.}
    \label{fig:linecaterpillarwidecase}
\end{figure}
\medskip

 Second, consider the case where $H$ is the line graph of a caterpillar. It follows that for each $l\in [7]$, we have $s_l\in H\setminus P(H)$ and $s_l$ has exactly two neighbors $p_l,q_l\in P(H)$, where $p_l$ and $q_l$ are adjacent, and the vertices $p_1,q_1,p_2,q_2,\ldots, p_7,q_7$ appear on $P(H)$ in this order. Since $G$ is prism-free, it follows that $(D_1,X)$ is either spiky or wide. Suppose that $(D_1,X)$ is spiky (see Figure~\ref{fig:linecaterpillarspikycase}). Then $i'=j'$. Let $\Sigma$ be the pyramid with apex $d_{i'}=d_{j'}$, base $\{p_4,s_4,q_4\}$ and paths 
$$P_1=d_{i'}\dd D_1\dd d_i\dd x_1\dd r_1\dd L_1\dd s_1\dd q_1\dd P(H)\dd p_4;$$
$$P_2=d_{i'}\dd x_4\dd r_4\dd L_4\dd s_4;$$
$$P_3=d_{i'}\dd D_1\dd d_j\dd x_7\dd r_7\dd L_7\dd s_7\dd p_7\dd P(H)\dd q_4.$$
Then $Z(\Sigma)$ is a $7$-subset of $D_2 \cup \{d_{k}:i+2\leq k\leq j-2\} \cup  \{x_4\}$. Moreover, we have $d_i\in P_1^*\setminus N[Z(\Sigma)]$ and $d_j\in P_3^*\setminus N[Z(\Sigma)]$. So by Theorem~\ref{thm:pyramidmain}, $N[Z(\Sigma)]$ separates $d_i$ and $d_j$. Now assume that  $(D_1,X)$ is wide (see Figure~\ref{fig:linecaterpillarwidecase}). Then $j'-i'>1$. Let $\Sigma$ be the pyramid with apex $x_4$, base $\{p_4,s_4,q_4\}$ and paths 
$$P_1=x_4\dd d_{i'}\dd D_1\dd d_i\dd x_1\dd r_1\dd L_1\dd s_1\dd q_1\dd P(H)\dd p_4;$$
$$P_2=x_4\dd r_4\dd L_4\dd s_4;$$
$$P_3=x_4\dd d_{j'}\dd D_1\dd d_j\dd x_7\dd r_7\dd L_7\dd s_7\dd p_7\dd P(H)\dd q_4.$$
Let $Z=(N(x_4)\cap \Sigma)\cup \{p_4,s_4,q_4\}$. Then $Z(\Sigma)$ is a $7$-subset of $D_2 \cup \{d_{k}:i+2\leq k\leq j-2\} \cup  \{x_4\}$. Also, we have $d_i\in P_1^*\setminus N[Z(\Sigma)]$ and $d_j\in P_3^*\setminus N[Z(\Sigma)]$. So by Theorem~\ref{thm:pyramidmain}, $N[Z(\Sigma)]$ separates $d_i$ and $d_j$, as required.
\begin{figure}[t!]
    \centering
    \includegraphics[scale=0.7]{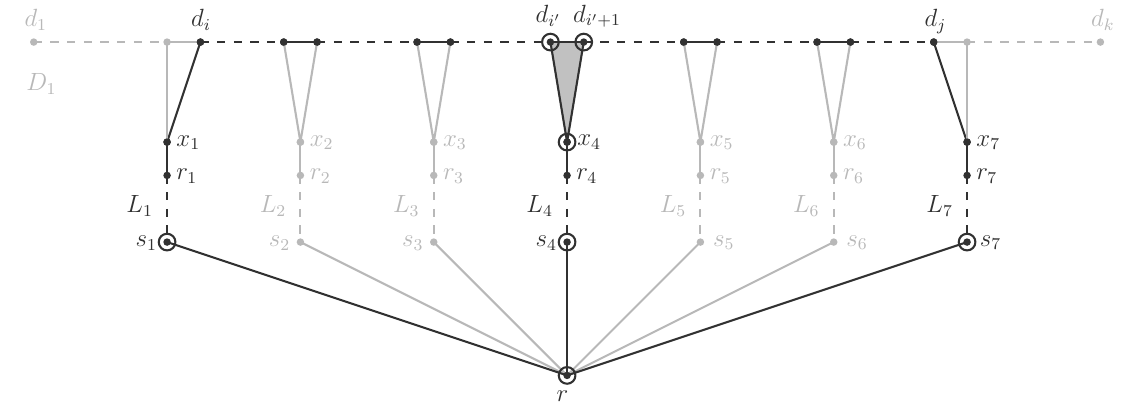}
    \caption{$H$ is a subdivided star. Circled nodes represent the vertices in $Z(\Sigma)$.}
    \label{fig:substarcase}
\end{figure}
\begin{figure}[t!]
    \centering
    \includegraphics[scale=0.7]{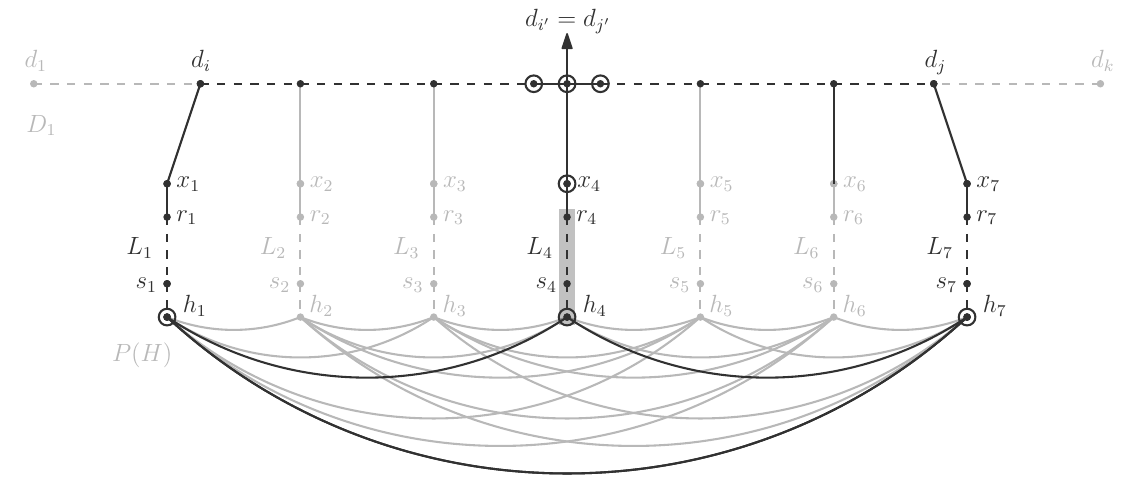}
    \caption{$H$ is the line graph of a subdivided star and $(D_1,X)$ is spiky. Circled nodes represent the vertices in $Z(\Sigma)$, and the highlighted path may be of length zero.}
\label{fig:linesubstarcasespikycase}
\end{figure}
\medskip

Third, consider the case where $H$ is a subdivided star with root $r$. It follows that $P(H)=\{r\}$ and $H \neq \{r\}$ (because $|H|>1$). For each $l\in [7]$, we have $r_l,s_l\in H\setminus P(H)$ and $r_l$ is a leaf of $H$. Since $G$ is theta-free, it follows that $(D_1,X)$ is triangular and so $j'-i'=1$ (see Figure~\ref{fig:substarcase}). Let $\Sigma$ be the pyramid with apex $r$, base $\{d_{i'},x_4,d_{j'}\}$ and paths 
$$P_1=r\dd s_1\dd L_1\dd r_1\dd x_1\dd d_i\dd D_1\dd d_{i'};$$
$$P_2=r\dd s_4\dd L_4\dd r_4\dd x_4;$$
$$P_3=r\dd s_7\dd L_7\dd r_7\dd x_7\dd d_j\dd D_1\dd d_{j'}.$$
Then $Z(\Sigma)$ is a $7$-subset of $D_2 \cup \{d_{k}:i+2\leq k\leq j-2\} \cup  \{x_4\}$. Also, we have $d_i\in P_1^*\setminus N[Z(\Sigma)]$ and $d_j\in P_3^*\setminus N[Z(\Sigma)]$. So it follows from Theorem~\ref{thm:pyramidmain} that $N[Z(\Sigma)]$ separates $d_i$ and $d_j$, as desired.
\begin{figure}[t!]
    \centering
    \includegraphics[scale=0.7]{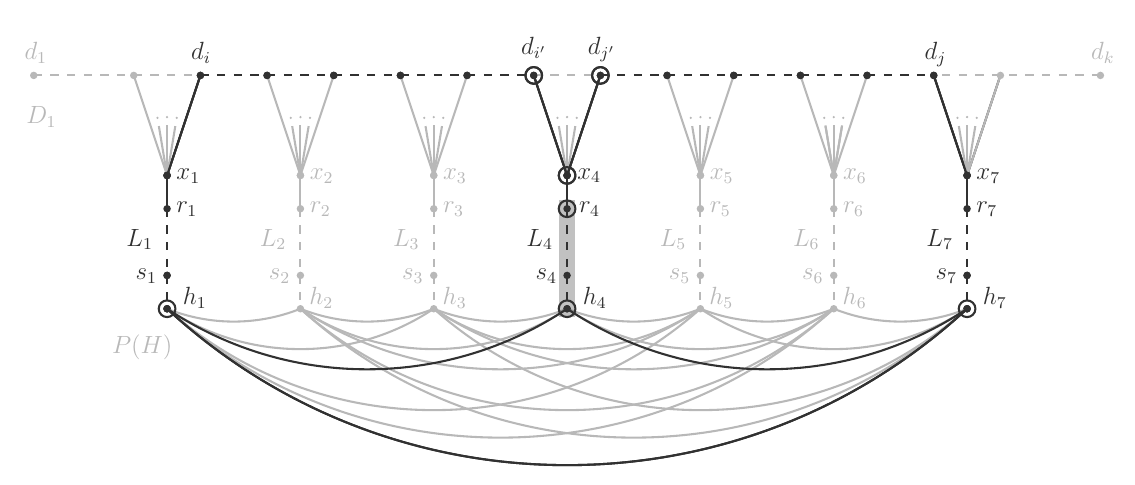}
    \caption{$H$ is the line graph of a subdivided star, $(D_1,X)$ is wide and the vertices $r_4,s_4,h_4$ are not all the same. 
Circled nodes represent the vertices in $Z(\Sigma)$, and the highlighted path has length at least one.}
\label{fig:linesubstarcasewidecase1}
\end{figure}
   \begin{figure}[t!]
    \centering
    \includegraphics[scale=0.7]{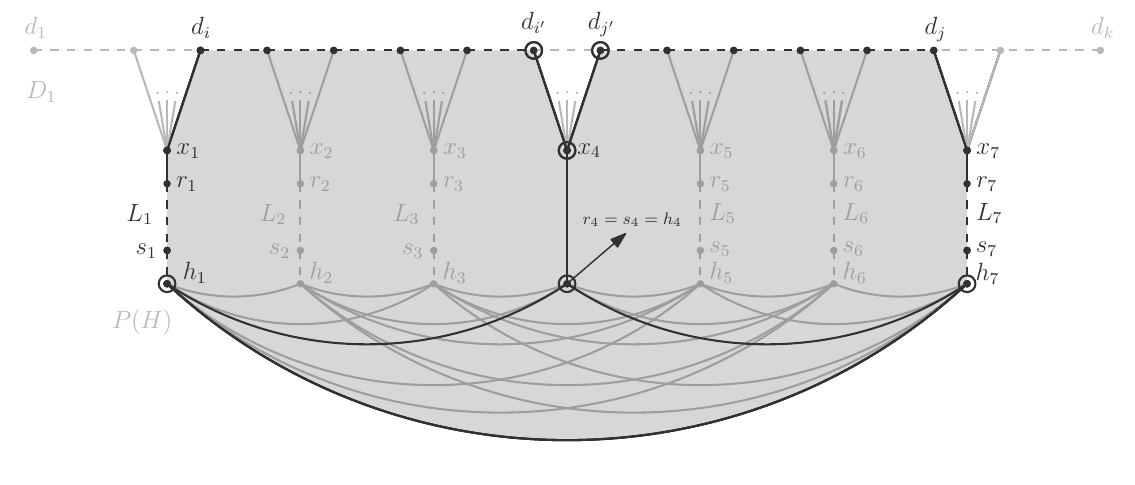}
    \caption{$H$ is the line graph of a subdivided star, $(D_1,X)$ is wide and $r_4=s_4=h_4$. The hole $C$ is highlighted, and circled nodes represent the vertices in $Z(W)$.}
\label{fig:linesubstarcasewidecase2}
\end{figure}
\medskip

Fourth, consider the case where $H$ is the line graph of a subdivided star. It follows that for each $l\in [7]$, either we have $s_l\in P(H)$, in which case we set $h_l=s_l$, or we have $s_l\in H\setminus P(H)$, in which case we choose $h_l$ to be the unique neighbor of $s_l$ in $P(H)$. Since $G$ is prism-free, it follows that $(D_1,X)$ is either spiky or wide. There are now three cases to analyze:
\begin{figure}[t!]
    \centering
    \includegraphics[scale=0.7]{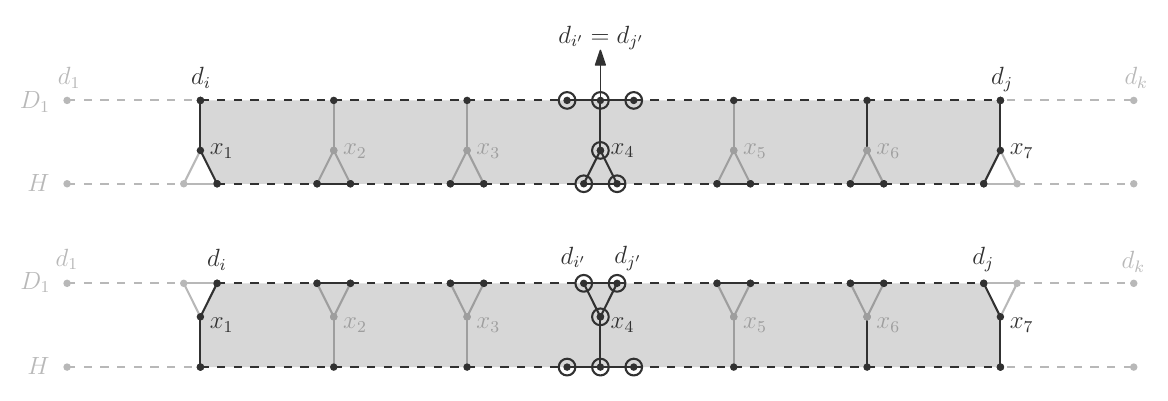}
    \caption{One of $(D_1,X)$ and $(H,X)$ is spiky and the other is triangular. The hole $C$ is highlighted, and circled nodes represent the vertices in $Z(W)$.}
    \label{fig:spikyvstriangularcase}
\end{figure}
\begin{figure}[t!]
    \centering
    \includegraphics[scale=0.7]{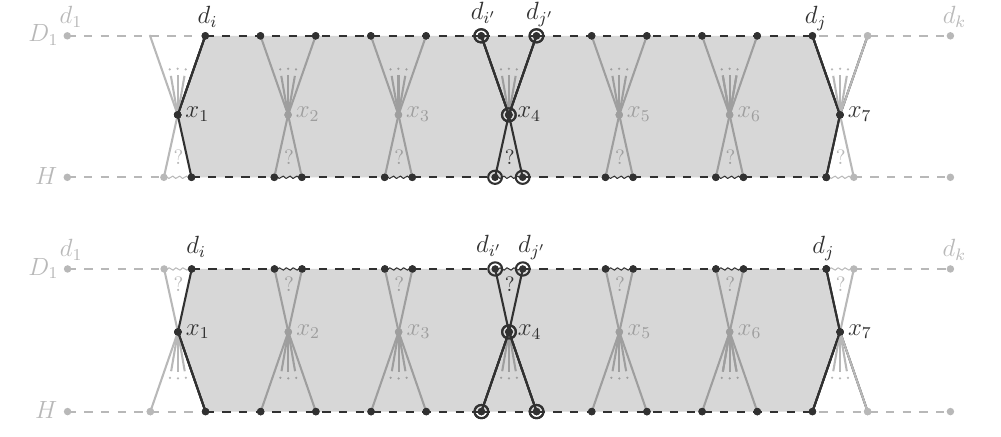}
    \caption{One of $(D_1,X)$ and $(H,X)$ is wide. The hole $C$ is highlighted, and circled nodes represent the vertices in $Z(W)$.}
    \label{fig:widevsnonwidecase}
\end{figure}
\begin{itemize}
\item[\sl Case 1.] Suppose that $(D_1,X)$ is spiky (see Figure~\ref{fig:linesubstarcasespikycase}). Then we have $i'=j'$. Consider the pyramid $\Sigma$ in $G$ with apex $d_{i'}=d_{j'}$, base $\{h_1,h_4,h_7\}$ and paths 
$$P_1=d_{i'}\dd D_1\dd d_i\dd x_1\dd r_1\dd L_1\dd s_1\dd h_1;$$
$$P_2=d_{i'}\dd x_4\dd r_4\dd L_4\dd s_4\dd h_4;$$
$$P_3=d_{i'}\dd D_1\dd d_j\dd x_7\dd r_7\dd L_7\dd s_7\dd h_7.$$
Then $Z(\Sigma)$ is a $7$-subset of $D_2 \cup \{d_{k}:i+2\leq k\leq j-2\} \cup  \{x_4\}$. Moreover, we have $d_i\in P_1^*\setminus N[Z(\Sigma)]$ and $d_j\in P_3^*\setminus N[Z(\Sigma)]$. Thus, by Theorem~\ref{thm:pyramidmain}, $N[Z(\Sigma)]$ separates $d_i$ and $d_j$.

\item[\sl Case 2.]  Suppose that $(D_1,X)$ is wide and the vertices $r_4,s_4,h_4$ are not all the same (see Figure~\ref{fig:linesubstarcasewidecase1}).  Then $j'-i'>1$. Let $\Sigma$ be the pyramid with apex $x_4$, base $\{h_1,h_4,h_7\}$ and paths 
$$P_1=x_4\dd d_{i'}\dd D_1\dd d_i\dd x_1\dd r_1\dd L_1\dd s_1\dd h_1;$$
$$P_2=x_4\dd r_4\dd L_4\dd s_4\dd h_4;$$
$$P_3=x_4\dd d_{j'}\dd D_1\dd d_j\dd x_7\dd r_7\dd L_7\dd s_7\dd h_7.$$
Then $Z(\Sigma)$ is a $7$-subset of $D_2 \cup \{d_{k}:i+2\leq k\leq j-2\} \cup  \{x_4\}$, and we have $d_i\in P_1^*\setminus N[Z(\Sigma)]$ and $d_j\in P_3^*\setminus N[Z(\Sigma)]$. It follows from Theorem~\ref{thm:pyramidmain} that $N[Z(\Sigma)]$ separates $d_i$ and $d_j$.

\item[\sl Case 3.] Suppose that $(D_1,X)$ is wide and  $r_4=s_4=h_4$ (see Figure~\ref{fig:linesubstarcasewidecase2}).  Then $j'-i'>1$. Let 
 $C=x_4\dd d_{i'}\dd D_1\dd d_i\dd x_1\dd r_1\dd L_1\dd s_1\dd h_1\dd h_7\dd s_7\dd L_7\dd r_7\dd x_7\dd d_j\dd D_1\dd d_{j'}\dd x_4.$ 
 Then $C$ is a hole on more than seven vertices and $W=(C,h_4)$ is a special wheel in $G$ where $Z(W)=\{d_{i'},d_{j'},h_1,h_4,h_7,x_4\}$; in particular, $Z(W)$ is a $6$-subset of $D_2 \cup \{d_{k}:i+2\leq k\leq j-2\} \cup  \{x_4\}$. By Theorem~\ref{thm:wheel2}, $N[Z(W)]$ separates $d_i$ and $d_j$.
   \end{itemize}  
\medskip

Finally, assume that $(H,X)$ is a consistent alignment. Recall that $(D_1,X)$ is also a consistent alignment, and that $(D_1,X)$ and $(H,X)$ give the same order $x_1,\ldots,x_7$ on $X$. Let $R$ be the unique path in $G$ from $x_1$ to $x_7$ with $R^*\subseteq H$. Then 
$C=d_i\dd x_1\dd R\dd x_7\dd d_j\dd D_1\dd d_i$
is a hole on more than seven vertices in $G$. Also, since $G$ is (theta, prism)-free, it follows that either one of $(D_1,X)$ and $(H,X)$ is spiky and the other is triangular, or at least one of $(D_1,X)$ and $(H,X)$ is wide. In the former case, $W=(C,x_4)$ is a special wheel (see Figure~\ref{fig:spikyvstriangularcase}). It follows from Theorem~\ref{thm:wheel2} that $Z(W)$ is a $6$-subset of $D_2 \cup \{d_{k}:i+2\leq k\leq j-2\} \cup  \{x_4\}$ such that  $N[Z(W)]$ separates $d_i$ and $d_j$. In the latter case, $W=(C,x_4)$ is a non-special wheel (see Figure~\ref{fig:widevsnonwidecase}). Since $G$ is $K_{1,t}$-free, it follows that $Z(W)=N_C[x_4]\subseteq D_2 \cup \{d_{k}:i+2\leq k\leq j-2\} \cup  \{x_4\}$ has cardinality at most $2t$. Moreover, by Theorem~\ref{thm:wheel1}, $N[Z(W)]$ separates $d_i$ and $d_j$. This completes the proof of Theorem~\ref{thm:amicable}. 
\end{proof}

We also need the following result from \cite{tw15}:

\begin{theorem}[Chudnovsky, Gartland, Hajebi, Lokshtanov, Spirkl \cite{tw15}]\label{thm:balancedseptw15}
For every $m\in \poi$ and every $m$-amicable graph class $\mca{G}$, there is a constant $f_{\ref{thm:balancedseptw15}}=f_{\ref{thm:balancedseptw15}}(\mca{G},m)\in \poi$ with the following property. Let $\mca{G}$ be a graph class which is $m$-amicable. Let $G \in \mca{G}$ and let $w$ be a normal weight function on $G$.
Then there exists $Y \subseteq V(G)$ such that
\begin{itemize}
\item $|Y| \leq f_{\ref{thm:balancedseptw15}}$, and
\item $N[Y]$ is a $w$-balanced separator in $G$.
\end{itemize}
\end{theorem}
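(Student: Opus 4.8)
The plan is to prove the statement by a repeated-cutting argument: I will maintain a bounded set $Y$, and as long as $N[Y]$ is not a $w$-balanced separator I will enlarge $Y$ by a bounded amount, using amicability to split off the unique heavy component, until balance is achieved. The key first observation is that since $w(G)=1$, at every stage $G\setminus N[Y]$ has at most one component $D$ with $w(D)>1/2$; call it the heavy component. Each application of amicability will add at most $m$ vertices to $Y$, so it suffices to bound the number of rounds by a function of $m$ and $s:=\sigma(7)$ alone, where $\sigma$ is the amiability function for $\mca{G}$; this will give the promised $f_{\ref{thm:balancedseptw15}}$.

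The heart of the argument is to manufacture, inside a heavy component $D$, a $\sigma(7)$-trisection $(D_1,Y',D_2)$ to which amiability and amicability can be applied. Here I would argue that if no small $N[Y']$ balances $G$, then $D$ is highly linked: there are many internally disjoint paths between heavy sub-regions of $D$. From such a family I would extract, via a Menger-type flow argument together with Ramsey's theorem, an induced path $D_1$ in $D$ and a stable set $Y'$ of size $s$ all of whose members have a neighbor on $D_1$ and whose ``far sides'' assemble into a single connected region $D_2$ anticomplete to $D_1$, with $N(D_1)=N(D_2)=Y'$. The pruning needed to make $D_1$ induced, $Y'$ stable, and the two neighborhoods exactly equal to $Y'$ is delicate, and this step is the main obstacle.

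Granting the trisection, the rest follows the machinery already set up. Amiability produces $X=\{x_1,\dots,x_7\}\subseteq Y'$ (ordered along $D_1$) together with an $H\subseteq D_2$, and amicability then produces $Z$ with $|Z|\le m$ such that $N[Z]$ separates the two ends $d_i,d_j$ of $D_1$, hence separates $d_1\dd D_1\dd d_i$ from $d_j\dd D_1\dd d_k$. The point is to choose the trisection so that this cut is weight-reducing: by routing $D_1$ through a weight-central part of $D$, I can guarantee that every component of $D\setminus N[Z]$ has weight bounded away from $w(D)$. Adding $Z$ to $Y$ then strictly decreases the heaviest-component weight.

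Finally, I would bound the number of rounds. The subtle point is that the weights are arbitrary reals, so a naive ``weight decreases'' argument need not terminate in boundedly many steps; instead I would track a combinatorial potential (such as the order of a maximal well-linked heavy collection, or a suitably quantized weight profile) that is bounded in terms of $m$ and $s$ and strictly drops at each cut. Once the potential can drop no further, no heavy component remains, so $N[Y]$ is $w$-balanced, and $|Y|\le m\cdot(\text{number of rounds})\le f_{\ref{thm:balancedseptw15}}$. The two places demanding the most care are the trisection extraction (the genuine structural obstacle) and the weight-independent termination bound.
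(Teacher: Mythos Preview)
This theorem is not proved in the paper; it is quoted verbatim from \cite{tw15} and used as a black box. So there is no ``paper's own proof'' to compare your attempt against.

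As for your sketch on its own terms: the overall shape is right, but the two places you flag as ``demanding the most care'' are not minor technicalities --- they are essentially the entire content of the result. In particular, the trisection extraction is the real difficulty. Your plan to produce a $\sigma(7)$-trisection inside a heavy component via ``many internally disjoint paths between heavy sub-regions'' and then ``Menger plus Ramsey'' is too vague to count as a proof: you need $D_1$ to be an \emph{induced} path with the special property that every $y\in Y'$ has a private neighbor on it, and you need $D_2$ connected with $N(D_1)=N(D_2)=Y'$ exactly. Arranging all of this simultaneously, while also ensuring that the resulting amicability cut is genuinely weight-reducing, is not something a generic Menger/Ramsey argument delivers; the actual proof in \cite{tw15} requires a careful iterative ``central bag'' construction and a nontrivial potential argument to guarantee termination in a number of rounds depending only on $m$ and $\sigma(7)$. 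Your termination paragraph correctly identifies that real-valued weights do not give a bounded descent for free, but ``track a combinatorial potential'' is a statement of what is needed, not a proof that such a potential exists.

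In short: the high-level strategy is the intended one, but what you have written is an outline with the two hard steps left as assertions, not a proof.
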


Now, defining $f_{\ref{thm:balancedsep}}(t)=f_{\ref{thm:balancedseptw15}}(\mca{C}_t,\max\{2t,7\})$ for every $t\in \poi$, Theorem~\ref{thm:balancedsep} is immediate from Theorems~\ref{thm:amicable} and \ref{thm:balancedseptw15}.


\section{Acknowledgement}
 Part of this work was done when Nicolas Trotignon visited Maria Chudnovsky at Princeton University with the generous support of the H2020-MSCA-RISE project CoSP- GA No. 823748.

\bibliographystyle{abbrv}
\bibliography{ref}  

\end{document}